\let\pa\partial
\let\na\nabla
\let\eps\varepsilon
\newcommand{\N}{{\mathbb N}}
\newcommand{\R}{{\mathbb R}}
\newcommand{\diver}{\operatorname{div}}
\newtheorem{theorem}{Theorem}
\newtheorem{lemma}[theorem]{Lemma}
\newtheorem{remark}[theorem]{Remark}
\newtheorem{definition}{Definition}
\begin{document}

\title[Weak-strong uniqueness]{Weak-strong uniqueness of renormalized solutions
to reaction-cross-diffusion systems}

\author[X. Chen]{Xiuqing Chen}
\address{School of Sciences, Beijing University of Posts and Telecommunications,
Beijing 100876, China}
\email{buptxchen@yahoo.com}

\author[A. J\"ungel]{Ansgar J\"ungel}
\address{Institute for Analysis and Scientific Computing, Vienna University of
	Technology, Wiedner Hauptstra\ss e 8--10, 1040 Wien, Austria}
\email{juengel@tuwien.ac.at}

\date{\today}

\thanks{The first author acknowledges support from the National Natural Science
Foundation of China (NSFC), grant 11471050, and from the China Scholarship Council
(CSC), file no.\ 201706475001, who financed his stay in Vienna.
The second author acknowledges partial support from
the Austrian Science Fund (FWF), grants F65, I3401, P27352, P30000, and W1245}

\begin{abstract}
The weak-strong uniqueness for solutions to reaction-cross-diffusion systems
in a bounded domain with no-flux boundary conditions is proved. The system
generalizes the Shigesada-Kawasaki-Teramoto population model to an arbitrary
number of species. The diffusion matrix is neither symmetric nor positive definite,
but the system possesses a formal gradient-flow or entropy structure.
No growth conditions on the source terms are imposed.
It is shown that any renormalized solution coincides with a strong solution
with the same initial data, as long as the strong solution exists.
The proof is based on the evolution of the relative entropy modified by
suitable cutoff functions.
\end{abstract}

%\paragraph{Keywords:}
\keywords{Shigesada-Kawasaki-Teramoto model, renormalized solution,
weak-strong uniqueness, relative entropy.}

%\paragraph{AMS classification:}
\subjclass[2000]{35A02, 35K51, 35K55, 35Q92, 92D25}

\maketitle

%%%%%%%%%%%%%%%%%%%%%%%%%%%%%%%%%%%%%%%%%%%%%%%%%%%%%%%%%%%%%%%%%%%%%%%%%%%%%%%

\section{Introduction}

This paper is a continuation of our work \cite{ChJu17}, in which we proved the
global existence of renormalized solutions to a class of reaction-cross-diffusion
systems describing the evolution of population species. The reaction part does
not obey any growth condition which makes it necessary to use the concept
of renormalized solutions like in \cite{Fis15}. The uniqueness of weak solutions
to cross-diffusion systems is a very delicate topic, and there are very few results
only for special problems; we refer to \cite{ChJu18} and references therein.
In this work, we show a weak-strong uniqueness result for the population
cross-diffusion system. This means that any
renormalized solution coincides with a strong solution emanating from the same
initial data as long as the latter exists. This paper generalizes the weak-strong
uniqueness result of Fischer \cite{Fis17} for {\em semilinear}
reaction-diffusion systems to {\em quasilinear} reaction-cross-diffusion problems.

More specifically, we consider the evolution of $n$ population species
with densities $u_i=u_i(x,t)$, $i=1,\ldots,n$, whose evolution
is governed by the equations
\begin{equation}\label{1.eq}
  \pa_t u_i - \diver\bigg(\sum_{j=1}^n A_{ij}(u)\na u_j - u_ib_i\bigg) = f_i(u)
	\quad\mbox{in }\Omega,\ i=1,\ldots,n,
\end{equation}
where $A_{ij}(u)$ are the density-dependent diffusion coefficients,
$u=(u_1,\ldots,u_n)$ is the density vector, $b_i\in\R^d$ is
a given vector which describes the environmental potential acting on the $i$th species,
$f_i(u)$ is a reaction term describing the population growth dynamics,
and $\Omega\subset\R^d$ ($d\ge 1$) is a bounded domain.
We impose no-flux boundary and initial conditions,
\begin{equation}\label{1.bic}
  \bigg(\sum_{j=1}^n A_{ij}(u)\na u_j - u_ib_i\bigg)\cdot\nu = 0\quad\mbox{on }\pa\Omega,
	\quad u_i(\cdot,0)=u_i^0\quad\mbox{in }\Omega,\ i=1,\ldots,n,
\end{equation}
where $\nu$ is the exterior unit normal vector on $\pa\Omega$.
The diffusion coefficients are given by
\begin{equation}\label{1.A}
  A_{ij}(u) = \delta_{ij}\bigg(a_{i0} + \sum_{k=1}^n a_{ik}u_k\bigg) + a_{ij}u_i,
	\quad i,j=1,\ldots,n,
\end{equation}
where $a_{i0}\ge 0$, $a_{ij}\ge 0$ for $i,j=1,\ldots,n$, and $\delta_{ij}$ is the
Kronecker delta. Observe that the diffusion matrix is generally neither symmetric
nor positive definite, which constitutes a major difficulty in the analysis of
the diffusion system. This problem is overcome by exploiting its entropy structure,
which is explained below.

%%%%%%%%%%%%%%%%%%%%

\subsection{State of the art}

System \eqref{1.eq}-\eqref{1.A} has been suggested by Shigesada, Kawasaki, and
Teramoto for $n=2$ species to describe the segregation of populations \cite{SKT79}.
The equations (for any $n\ge 2$) were derived from a random-walk on a lattice
in the diffusion limit \cite{ZaJu17}.
The global existence of nonnegative weak solutions to
\eqref{1.eq}-\eqref{1.A} for two species was proved in \cite{ChJu04} for any
coefficients $a_{ij}>0$. This result was generalized to an arbitrary number of
species in \cite{CDJ18}, under a growth condition on the source terms. This
condition could be replaced by a weaker entropy-dissipation assumption, yielding
the global existence of renormalized solutions \cite{ChJu17}.

The concept of renormalized solutions has been introduced by DiPerna and Lions
for transport and Boltzmann equations \cite{DiLi88,DiLi89,DiLi89a}.
The idea is to replace the solution $u$ by a
nonlinear function $\xi(u)$ with compact support. This concept
was applied also to elliptic and parabolic problems (e.g.\ \cite{AIMT08,DMDP99})
and diffusion systems (e.g.\ \cite{DFPV07,Fis15}).

Weak-strong uniqueness was established by Leray \cite{Ler34} for incompressible
Navier-Stokes equations and by Dafermos \cite{Daf10} for conservation laws;
see the review by Wiedemann \cite{Wie17} for more details.
Later this concept has been applied
to other fluid models, including measure-valued solutions \cite{FeNo12,GSW15};
to magneto-viscoelastic flow equations \cite{ScZa17}; and
to gradient flows based on optimal transport \cite{BrDu17}.
As far as we know, there are very few works on the weak-strong uniqueness
involving renormalized solutions. An example is the paper \cite{FeTu17},
where the weak-strong uniqueness for renormalized relaxed Lagrangian solutions
to semi-geostrophic equations was shown, and the already mentioned work \cite{Fis17}
by Fischer on the weak-strong uniqueness for renormalized solutions to
reaction-diffusion systems.

The question of uniqueness of weak solutions to parabolic diffusion systems
is extremely delicate. One of the first results is due to Alt and Luckhaus
\cite{AlLu83} for linear elliptic operators. Pham and Temam \cite{PhTe17} proved
a uniqueness result for the population system \eqref{1.eq}-\eqref{1.A}, but only
for two species and assuming a positive definite diffusion matrix. Finally,
Gajewski's uniqueness method was applied to a simplified
volume-filling cross-diffusion system
in \cite{ZaJu17}. Up to our knowledge, there does not exist any uniqueness result
for generalized solutions to the population system \eqref{1.eq}-\eqref{1.A}
without the assumptions imposed in \cite{PhTe17}.

%%%%%%%%%%%%%%%%%

\subsection{Key ideas}\label{sec.ideas}

The analysis of \eqref{1.eq}-\eqref{1.A} is based on its entropy structure.
This means that under some conditions, there exists a convex
Lyapunov functional, which is called an entropy and which yields gradient estimates.
The entropy gives rise to a transformation
to entropy variables that makes the transformed diffusion matrix positive semidefinite,
thus reveiling the parabolic structure of the evolution system.
For this result, we need two assumptions. The first one are entropy-dissipating
source terms, which means that there exist numbers $\pi_1,\ldots,\pi_n>0$ and
$\lambda_1,\ldots,\lambda_n\in\R$ such that
\begin{equation}\label{1.f}
  \sum_{i=1}^n\pi_i f_i(u)(\log u_i+\lambda_i)\le 0 \quad\mbox{for all }u\in(0,\infty)^n.
\end{equation}
This condition implies the quasi-positivity of $f_i$ which is necessary to conclude
nonnegative solutions to \eqref{1.eq}.
Note that we do not impose any growth restriction on the reaction terms,
modeling possibly fast growing populations.

Condition \eqref{1.f} ensures that the entropy density
\begin{equation}\label{1.h}
  h(u) = \sum_{i=1}^n \pi_i h_i(u_i), \quad
	h_i(s) = s(\log s-1+\lambda_i) + e^{-\lambda_i},
\end{equation}
is a Lyapunov functional for the pure reaction system $\pa_t u_i=f_i(u)$ if
$\pi_i=1$ for all $i=1,\ldots,n$. When the diffusion terms are present, a second
assumption is needed, namely either the weak cross-diffusion condition
\begin{equation}\label{1.wcd}
  \eta := \min_{i=1,\ldots,n}\bigg(a_{ii} - \frac14\sum_{j=1}^n
	\big(\sqrt{a_{ij}}-\sqrt{a_{ji}}\big)^2\bigg) > 0,
\end{equation}
or the detailed-balance condition
\begin{equation}\label{1.dbc}
  \pi_ia_{ij} = \pi_ja_{ji} \quad\mbox{for all }i,j=1,\ldots,n,\ i\neq j.
\end{equation}
In the former case, we may choose $\pi_i=1$. For an interpretation of the
detailed-balance condition, we refer to \cite{CDJ18}.

Under conditions \eqref{1.f} and either \eqref{1.wcd} or \eqref{1.dbc},
the matrix product $A(u)h''(u)^{-1}$ is positive semidefinite
(here, $h''(u)$ denotes the Hessian of $h(u)$), i.e.\ for any $z\in\R^n$,
\begin{equation}\label{1.posdef}
  z:A(u)h''(u)^{-1}z = \sum_{i,j=1}^n A_{ij}(u)u_jz_iz_j
	\ge \alpha_0\sum_{i=1}^n u_iz_i^2 + 2\eta_0\sum_{i=1}^n u_i^2z_i^2,
\end{equation}
for some constants $\alpha_0$, $\eta_0>0$; see Lemma \ref{lem.psd} below.
As a consequence,
the entropy $\int_\Omega h(u)dx$ is a Lyapunov functional along solutions
to \eqref{1.eq}-\eqref{1.A}, and we obtain the so-called entropy inequality
\begin{equation}\label{1.ent}
  \frac{d}{dt}\int_\Omega h(u)dx
	+ C\int_\Omega\sum_{i=1}^n\big(|\na\sqrt{u_i}|^2 + |\na u_i|^2\big)dx
  \le 0,
\end{equation}
where the constant $C>0$ depends on $\pi_i$ and $a_{ij}$.
Clearly, these assumptions are also needed for our uniqueness result. In fact,
we need an additional condition on the reaction terms detailed in hypothesis
(H2) below.

As in \cite{Fis17}, the key idea of the uniqueness proof is the use of the
relative entropy,
\begin{align*}
  H(u|v) &= \int_\Omega\big(h(u)-h(v)-h'(v)\cdot(u-v)\big)dx \\
	&= \sum_{i=1}^n\int_\Omega\big(u_i(\log u_i-1) - u_i\log v_i + v_i\big)dx,
\end{align*}
which can be seen as a generalized distance between a renormalized solution
$u$ and a strong solution $v$. There is a relation between Gajewski's semimetric
and the relative entropy; see the discussion in \cite[Remark 4]{ChJu18}.
To simplify the following formal arguments (which are made rigourous
in section \ref{sec.proof}), we set $b_i=0$, $\lambda_i=0$, and $\pi_i=1$.
A computation shows that
\begin{align}
  \frac{dH}{dt}(u|v)
	&= -\sum_{i,j=1}^n\int_\Omega A_{ij}(u)u_j\na\log\frac{u_i}{v_i}
	\cdot\na\log\frac{u_j}{v_j}dx \nonumber \\
	&\phantom{xx}{}- \sum_{i,j=1}^n \int_\Omega\bigg(A_{ij}(u)\frac{u_j}{v_j}
	- A_{ij}(v)\frac{u_i}{v_i}\bigg)
	\na v_j\cdot\na\log\frac{u_i}{v_i}dx \label{1.dHdt} \\
	&\phantom{xx}{}+ \sum_{i=1}^n\int_\Omega \bigg(f_i(u)
	\log\frac{u_i}{v_i}+f_i(v)\left(1-\frac{u_i}{v_i}\right)\bigg)dx
	=: G_1 + G_2 + G_3. \nonumber
\end{align}
The second term $G_2$ is a result of the strong coupling and does not
appear in reaction-diffusion systems with diagonal and constant
diffusion matrix as in \cite{Fis17}.
The positive semidefiniteness property \eqref{1.posdef} shows that the
first term $G_1$ can be estimated from below,
\begin{equation}\label{1.G1}
  G_1 \le -2\eta_0\sum_{i=1}^n\int_\Omega u_i^2\bigg|\na\log\frac{u_i}{v_i}\bigg|^2 dx.
\end{equation}
Using the special structure \eqref{1.A} of the diffusion matrix, the second term
$G_2$ can be reformulated and estimated as
\begin{align*}
  G_2 &= -\sum_{i,j=1}^n a_{ij} u_i(u_j-v_j)
	\na\log(v_iv_j)\cdot\na\log\frac{u_i}{v_i}dx \\
	&\le C(v)\sum_{i,j=1}^n\int_\Omega |u_j-v_j|u_i\bigg|\na\log\frac{u_i}{v_i}\bigg|dx \\
	&\le \eta_0\sum_{i=1}^n\int_\Omega u_i^2\bigg|\na\log\frac{u_i}{v_i}\bigg|^2 dx
	+ C(v)\sum_{i=1}^n\int_\Omega|u_i-v_i|^2 dx.
\end{align*}
The first term on the right-hand side is absorbed by the right-hand
side of \eqref{1.G1}. The convexity of $h(u)$ shows that the relative entropy
is bounded from below by $\sum_{i=1}^n|u_i-v_i|^2$ (up to some constant),
provided that $u$ is bounded. In that situation, we infer that
$$
  \frac{dH}{dt}(u|v) \le C(v)H(u|v) + G_3, \quad t>0.
$$
Since we cannot prove the boundedness of $u$, we cannot use the relative entropy
directly. We need to construct a modified entropy with cutoff for $u_i$, such that the
previous arguments can be made rigorous. Note that this difficulty does not appear
when the diffusion matrix is diagonal and constant, as in \cite{Fis17}.
Indeed, then the term $G_2$ does not appear, and the only difficulty is to estimate
the remaining term $G_3$.

The idea of Fischer \cite{Fis17} to estimate $G_3$
is to introduce the relative entropy with cutoff for $v_i$,
$$
  \widetilde H_K^L(u|v) = \sum_{i=1}^n\int_\Omega\big(u_i(\log u_i+\lambda_i-1)
	-\widetilde\varphi_K^L(u)u_i(\log v_i+\lambda_i) + v_i\big)dx,
$$
where $K>3$, $L>0$ and $\widetilde\varphi_K^L$ is a cutoff function which
equals one if $\sum_{k=1}^n u_k\le L$ and vanishes if
$\sum_{k=1}^n u_k > (L+e)^{K}$,
$$
  \widetilde\varphi_K^L(u) = \varphi\bigg(\frac{\log(\sum_{k=1}^n u_k+e)-\log(L+e)}{
	(K-1)\log (L+e)}\bigg),
$$
$e=\exp(1)$ is the Euler number,
and $\varphi$ is a smooth cutoff such that $\varphi(s)=1$ if $s\le 0$
and $\varphi(s)=0$ if $s\ge 1$.
The cutoff allows for the control of $\widetilde\varphi_K^L(u)f_i(u)\log(1/v_i)$,
which appears in $G_3$ using $\widetilde H_K^L(u|v)$ instead of $H(u|v)$.

Unfortunately, this cutoff is not sufficient in the situation at hand, because of
the strong coupling in $G_1$ and $G_2$. Compared to \cite{Fis17},
we need two refinements. First, we introduce an additional cutoff:
\begin{equation}\label{1.enteps}
\begin{aligned}
  H_{K,\eps}^{M,L}(u|v) &= \int_\Omega\sum_{i=1}^n\Big(\varphi_K^M(u+\eps I)
	(u_i+\eps)\big(\log(u_i+\eps)+\lambda_i-1\big) \\
	&\phantom{xx}{}- \varphi_K^L(u+\eps I)(u_i+\eps)(\log u_i+\lambda_i) + v_i\Big)dx,
\end{aligned}
\end{equation}
where $M>L$, $\eps>0$, and $I=(1,\ldots,1)\in\R^n$. The parameter $\eps$ is
needed to control terms like $\log(u_i+\eps)$ when $u_i=0$.
Second, the cutoff function involves the double logarithm:
$$
  \varphi_K^L(u) := \varphi\bigg(\frac{\log\log(\sum_{k=1}^n u_k+e)
	- \log\log(L+e)}{\log(K+1)}\bigg).
$$
The additional logarithm slightly improves the estimates. Indeed,
$|\pa_j\widetilde\varphi_K^L(u)|$ is bounded by $C/[K(\sum_{k=1}^n u_k+e)]$,
while
$$
  |\pa_j\varphi_K^L(u)| \le \frac{C}{\log(K+1)(\sum_{k=1}^n u_k+e)
	\log(\sum_{k=1}^n u_k+e)}
$$
for some constant $C>0$.

These refinements allow us to estimate not only $G_1$ and $G_2$ but also $G_3$.
Then we can pass to the
limits $\eps\to 0$ and $M\to\infty$, yielding, for sufficiently large $K>0$,
$$
  \frac{dH_K^L}{dt}(u|v) \le C(K,L)H_K^L(u|v), \quad t>0,
$$
where $H_K^L(u|v):= \sum_{i=1}^n\int_\Omega\big(u_i(\log u_i+\lambda_i-1)
	-\varphi_K^L(u)u_i(\log v_i+\lambda_i) + v_i\big)dx$.
When $u$ and $v$ have the same initial data, we conclude for sufficiently
large $L>0$ that $H_K^L(u(t)|v(t))=0$
for all $t>0$ and hence, by Lemma \ref{lem.fis} below, $u(t)=u(t)$ for $t>0$.

%%%%%%%%%%%%%%%%%%%%%%

\subsection{Main results}

First, we specify our notion of renormalized solution.

\begin{definition}\label{def.renorm}
We call $u=(u_1,\ldots,u_n)$ a {\em renormalized solution} to
\eqref{1.eq}-\eqref{1.A} if for all $T>0$, $u_i\in L^2(0,T;H^1(\Omega))$
or $\sqrt{u_i}\in L^2(0,T;H^1(\Omega))$,
and for any $\xi\in C^\infty([0,\infty)^n)$ satisfying
$\xi'\in C_0^\infty([0,\infty)^n;\R^n)$ and
$\phi\in C_0^\infty(\overline\Omega\times[0,T))$, it holds that
\begin{align}
  -\int_0^T\int_\Omega&\xi(u)\pa_t\phi dxdt
	- \int_\Omega\xi(u^0)\phi(x,0)dx \nonumber \\
  &= -\sum_{i,k=1}^n\int_0^T\int_\Omega\phi\pa_i\pa_k\xi(u)\bigg(
	\sum_{j=1}^nA_{ij}(u)\na u_j - u_ib_i\bigg)\cdot\na u_kdxdt \nonumber \\
  &\phantom{xx}{}- \sum_{i=1}^n\int_0^T\int_\Omega\pa_i\xi(u)\bigg(
	\sum_{j=1}^nA_{ij}(u)\na u_j - u_ib_i\bigg)\cdot\na \phi dxdt \nonumber \\
  &\phantom{xx}{}+ \sum_{i=1}^n\int_0^T\int_\Omega\phi\pa_i\xi(u)f_i(u) dxdt.
	\label{1.renorm}
\end{align}
\end{definition}

We impose the following hypotheses.

\renewcommand{\labelenumi}{(H\theenumi)}
\begin{enumerate}
\item[\rm (H1)] Drift term: $b=(b_1,\ldots,b_n)\in L^\infty(0,T;L^\infty(\Omega;
\R^{n\times d}))$
for $i=1,\ldots,n$.
\item[\rm (H2)] Reaction terms: (i) $f=(f_1,\ldots,f_n):[0,\infty)^n\to\R^n$ is
locally Lipschitz continuous; (ii) there exist numbers $\pi_1,\ldots,\pi_n>0$
and $\lambda_1,\ldots,\lambda_n>0$ such that
\begin{equation*}%\label{1.flogu}
  \sum_{i=1}^n\pi_i f_i(u)(\log u_i+\lambda_i)\le 0  \quad\mbox{for all }u\in(0,\infty)^n;
\end{equation*}
(iii) %either $\sum_{i=1}^n f_i(u)\in L^1(\Omega\times(0,T))$, or
there exists $M_0\in\N$ such that for all $u\in[0,\infty)^n$ with
$\sum_{i=1}^n u_i\ge M_0$ it holds that $\sum_{i=1}^n f_i(u)\ge 0$.
\item[\rm (H3)] Initial data: $u^0=(u_1^0,\ldots,u_n^0)\in L^\infty(\Omega;\R^n)$
such that $\inf_\Omega u_i^0>0$ for $i=1,\ldots,n$.
\item[\rm (H4)] Diffusion coefficients: $a_{i0}>0$, $a_{ii}>0$ for $i=1,\ldots,n$
and either the weak cross-diffusion condition \eqref{1.wcd} holds and
$\pi_i=1$ for $i=1,\ldots,n$, or the detailed-balance condition \eqref{1.dbc} holds.
\end{enumerate}

\begin{remark}\label{rem}\rm
Under hypotheses (H1), (H2.i)-(H2.ii), (H3)-(H4), there exists a renormalized 
solution to \eqref{1.eq}-\eqref{1.A} satisfying $u_i\ge 0$ in $\Omega\times(0,T)$ and
$\int_\Omega h(u(t))dx<\infty$ for $t\in(0,T)$, and hence 
$u_i\in L^\infty(0,T;L^1(\Omega))$; see \cite{ChJu17}. If $a_{i0}>0$
and $a_{ii}>0$ for $i=1,\ldots,n$ then both functions $u_i$ and
$\sqrt{u_i}$ are in $L^2(0,T;H^1(\Omega))$. 
\qed
\end{remark}

\begin{remark}\rm
We discuss the assumptions.
Hypotheses (H1) and (H3) are rather natural. Condition (H2.ii) with
$\lambda_i=0$ was also imposed in \cite{DFPV07}, and we already mentioned
that it allows for the proof of the nonnegativity of the densities.
Condition (H2.iii) on the
positivity of $\sum_{i=1}^n f_i(u)$ may be surprising at first sight.
It means that in the absence of diffusion effects and for large total population,
the total population is still increasing. One would expect that an overcrowding
effect will lead to a decrease of the total population, thus requiring
$\sum_{i=1}^n f_i(u)\le 0$. However, in this situation, there is an upper bound
for the reaction terms and we can apply standard methods.
The situation becomes difficult when the total population
is not limited. This makes a priori estimate impossible (and makes necessary
the renormalization). An alternative condition is
$|\sum_{i=1}^n f_i(u)|\le C(1+|u|^p)$ for all $u\in[0,\infty)^n$ and $p=2+2/d$;
see Remark \ref{rem.alt}.
Finally, hypothesis (H4) is needed in the global existence
analysis to show that system \eqref{1.eq} has a certain parabolic structure; see
Lemma \ref{lem.psd} below.
\qed
\end{remark}

The main result of this paper reads as follows.

\begin{theorem}[Weak-strong uniqueness]\label{thm.ws}
Let (H1)-(H4) hold. Suppose that $u$ is a renormalized solution to
\eqref{1.eq}-\eqref{1.A} and $v$ is a ``strong'' solution to
\eqref{1.eq}-\eqref{1.A} on some time interval $[0,T^*)$ with $T^*\leq T$, in the
following sense: There exist $C>c>0$ such that
\begin{align}
  & c\le v_i(x,t)\le C\quad\mbox{for }(x,t)\in\Omega\times[0,T^*), \label{1.v1} \\
  &\|\pa_t v_i\|_{L^\infty(\Omega\times[0,T^*))}
	+ \|\na v_i\|_{L^\infty(\Omega\times[0,T^*))}\le C, \label{1.v2}
\end{align}
and for any $s\in(0,T^*)$, $\phi\in C^\infty(\overline\Omega\times[0,s])$,
and $i=1,\ldots,n$,
\begin{equation}\label{1.strong}
  \int_0^s\int_\Omega \phi\pa_t v_i dxdt
	= -\int_0^s\int_\Omega\bigg(\sum_{j=1}^nA_{ij}(v)\na v_j - v_ib_i\bigg)
	\cdot\na \phi dxdt + \int_0^s\int_\Omega\phi f_i(v) dxdt.
\end{equation}
Then $u(x,s)=v(x,s)$ for $x\in\Omega$, $s\in(0,T^*)$.
\end{theorem}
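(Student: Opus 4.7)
The plan is to work with the cutoff relative entropy $H_{K,\eps}^{M,L}(u|v)$ defined in \eqref{1.enteps} rather than the bare functional $H(u|v)$, since we only know $u \in L^\infty(0,T;L^1(\Omega))$ and cannot justify the chain rule $\pa_t\log u_i$ at the renormalized level. The key renormalizations to insert into \eqref{1.renorm} are $\xi(u) = \varphi_K^M(u+\eps I)(u_i+\eps)(\log(u_i+\eps)+\lambda_i-1)$ and $\xi(u) = \varphi_K^L(u+\eps I)(u_i+\eps)$; by construction each has $\xi'\in C_0^\infty$, so \eqref{1.renorm} applies verbatim. For terms in $H_{K,\eps}^{M,L}$ involving the strong solution $v$, I will test \eqref{1.strong} with smooth approximations of $(\log v_i + \lambda_i)$ and with $u_i$-dependent weights produced from a mollification argument, using the regularity \eqref{1.v1}--\eqref{1.v2} of $v$ together with the pointwise lower bound $v_i\ge c>0$ to ensure that $\log v_i$, $1/v_i$ and $\na\log v_i$ are all bounded.

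After combining the two formulations, I expect $d H_{K,\eps}^{M,L}/dt$ to split into the three groups $G_1,G_2,G_3$ described in \eqref{1.dHdt} (now weighted by $\varphi_K^L$ and $\varphi_K^M$), plus cutoff-derivative remainders driven by $\na\varphi_K^{L}(u+\eps I)$. The quadratic diffusion term $G_1$ is bounded above via \eqref{1.posdef} from Lemma \ref{lem.psd}, producing a nonpositive contribution of the form $-2\eta_0\sum_i\int u_i^2|\na\log(u_i/v_i)|^2\,dx$ (modulo $\eps$). Using the explicit form \eqref{1.A}, the off-diagonal term $G_2$ is recast as a sum involving $a_{ij}u_i(u_j-v_j)\na\log(v_iv_j)$ paired with $\na\log(u_i/v_i)$; Cauchy--Schwarz (with the $L^\infty$-bound on $\na v$) absorbs half of it into $G_1$ and leaves a remainder bounded by $C(v)\sum_i\|u_i-v_i\|_{L^2}^2$, which is controlled by $H_{K,\eps}^{M,L}(u|v)$ via convexity of $h$. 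The cutoff-derivative remainders carry the factor $|\pa_j\varphi_K^L|\le C/[\log(K+1)(\sum_k u_k+e)\log(\sum_k u_k+e)]$; paired against $u_i^2$-weighted gradients and integrated over the support $\sum_k u_k \ge L$, this double-logarithm decay (not the single-logarithm decay of \cite{Fis17}) is precisely what forces these remainders to $0$ as $K\to\infty$.

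The reaction term $G_3$ is decomposed using (H2): $\sum_i\pi_i f_i(u)(\log u_i+\lambda_i)\le 0$ by (H2.ii), the cross contribution $\sum_i\pi_i f_i(u)(\log v_i+\lambda_i)$ is pointwise bounded on the cutoff support $\{\sum_k u_k\le \exp\exp(\log\log(L+e)+\log(K+1))\}$ via (H2.i) combined with $c\le v_i\le C$, and the term $f_i(v)(1-u_i/v_i)$ is estimated by $H_{K,\eps}^{M,L}$ through the Csisz\'ar--Kullback inequality encoded in the convexity of $h$; hypothesis (H2.iii) ensures that for $L\ge M_0$ the cutoff does not clip away population growth. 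The resulting differential inequality has the schematic form $\frac{d}{dt}H_{K,\eps}^{M,L}(u|v)\le C(K,L)\,H_{K,\eps}^{M,L}(u|v) + R_{\eps,M,K}(t)$ with $R_{\eps,M,K}\to 0$ first as $\eps\to 0$, then as $M\to\infty$. Gronwall, applied to the limit inequality for $H_K^L$ and combined with $H_K^L(u^0|u^0)=0$, gives $H_K^L(u(t)|v(t))=0$ on $[0,T^*)$ whenever $L$ is large enough that the cutoff is active on the support of $v$; Lemma \ref{lem.fis} then yields $u=v$.

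The main obstacle is the simultaneous handling of the coupling term $G_2$ and the cutoff error: because the diffusion matrix is non-diagonal, $G_2$ does not vanish (as it does in \cite{Fis17}) and must be absorbed into $G_1$, so the $u_i^2$-weighted quadratic lower bound from (H4) via \eqref{1.posdef} is non-negotiable. At the same time, the cutoff must be smooth enough that differentiating it produces remainders small enough to be discarded in the limit $K\to\infty$ even after being multiplied by $u_i^2$-weighted quantities that are merely $L^1$ in $(x,t)$; this is the reason for replacing Fischer's single-logarithm cutoff by the double-logarithm construction $\varphi_K^L$. Managing the interplay between $\eps$ (needed so that $\log(u_i+\eps)$ is bounded where $u_i=0$), $M$ (needed so that $u_i\log(u_i+\eps)$ makes sense as a renormalization) and $L, K$ (governing the cutoff on $v$) in the correct order of limits is where the technical bulk of the proof will lie.
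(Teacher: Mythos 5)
Your proposal matches the paper's proof in all essentials: the cutoff relative entropy $H_{K,\eps}^{M,L}$, the pair of renormalizations inserted into \eqref{1.renorm}, testing the strong formulation \eqref{1.strong} with a $u$-dependent weight, the positive-semidefiniteness from Lemma \ref{lem.psd} for $G_1$, absorbing $G_2$ into $G_1$ via Young/Cauchy--Schwarz, controlling the reaction part via (H2), the limits $\eps\to 0$ then $M\to\infty$, and Gronwall combined with Lemma \ref{lem.fis}. One small correction of mechanism: the cutoff-derivative remainders do not vanish as $K\to\infty$; the terms $P_{11},\ldots,P_{15}$ carry the prefactor $C/\log(K+1)$ multiplying $\int\chi_{\{\sum u_\ell\ge L\}}|\na u_i|^2$, which is absorbed by the fixed negative diffusion term $Q_1=-\eta_0\int\chi|\na u_i|^2$ once $K$ is chosen large enough (and then held fixed), while the additional factor $\log(\sum_k u_k+e)$ in the denominator of $|\pa_j\varphi_K^L|$ is what kills the $\log u_i$ growth coming from $h'$ during the $M\to\infty$ passage in $P_7$--$P_9$.
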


The population model \eqref{1.eq}-\eqref{1.A} can be derived from a random-walk
on-lattice model with transition rates that depend linearly on the densities
\cite{ZaJu17}. When the dependence is nonlinear (e.g.\ power functions),
we obtain population models
with coefficients $A_{ij}(u)$ that depend nonlinearly on $u_k$.
These models were analyzed in, e.g., \cite{DLMT15,ZaJu17}. However, it is unclear
to what extent the weak-strong uniqueness result can be extended to this case,
since the entropy density becomes a power function, and the construction of
suitable cutoff functions is an open problem.

As explained in section \ref{sec.ideas}, the proof of the theorem is highly
technical, involving two approximation levels with parameters
$\eps>0$, $M>0$, and $K>0$. The idea is to choose renormalizations
$\xi(u)$ involving $\varphi_K^L(u)$ and $\varphi_K^M(u)$ in
\eqref{1.renorm}, respectively, and to estimate all occuring terms, leading
to lengthy estimations. We summarize some
auxiliary results in section \ref{sec.aux} and present the proof of
Theorem \ref{thm.ws} in section \ref{sec.proof}.

%%%%%%%%%%%%%%%%%%%%%%%%%%%%%%%%%%%%%%%%%%%%%%%%%%%%%%%%%%%%%%%%%%%%%%%%%%%%%%%

\section{Some auxiliary results}\label{sec.aux}

As explained in the introduction, the matrix $A(u)h''(u)^{-1}$ is positive
semidefinite under hypothesis (H4). We recall the precise result.

\begin{lemma}\label{lem.psd}
Let hypothesis (H4) hold. Then for all $z\in\R^n$,
$$
  z:A(u)h''(u)^{-1}z
	= \sum_{i,j=1}^n A_{ij}(u)u_jz_iz_j
	\ge \alpha_0\sum_{i=1}^n u_iz_i^2
	+ 2\eta_0\sum_{i=1}^n u_i^2z_i^2,
$$
where the coefficients of $A(u)$ are given in \eqref{1.A}, $h(u)$ is defined
in \eqref{1.h}, $\alpha_0=\min_{i=1,\ldots,n}\pi_i^{-1} a_{i0}>0$, $\eta_0=\eta$ if
\eqref{1.wcd} holds and $\eta_0=\min_{i=1,\ldots,n}\pi_i^{-1}a_{ii}>0$
if \eqref{1.dbc} holds.
\end{lemma}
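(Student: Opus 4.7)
The plan is to compute $z:A(u)h''(u)^{-1}z$ directly. Since $h''(u)=\operatorname{diag}(\pi_i/u_i)$, we have $h''(u)^{-1}=\operatorname{diag}(u_i/\pi_i)$, and the quadratic form reads
$$\sum_{i,j=1}^n \frac{A_{ij}(u)}{\pi_j}\,u_j z_i z_j.$$
Plugging in \eqref{1.A} and doing careful bookkeeping (noting that the $k=i$ term inside $\sum_k a_{ik}u_k$ and the $j=i$ term $a_{ij}u_i$ both produce $a_{ii}u_i^2/\pi_i$), one isolates the ``pure'' diagonal contribution $\sum_i (a_{i0}/\pi_i)u_i z_i^2 + 2\sum_i (a_{ii}/\pi_i)u_i^2 z_i^2$ together with a cross-term remainder which, after the natural pairing $(i,j)\leftrightarrow(j,i)$, equals
$$S=\sum_{i<j} u_i u_j\Bigl[\frac{a_{ij}}{\pi_i}z_i^2 + \frac{a_{ji}}{\pi_j}z_j^2 + \Bigl(\frac{a_{ij}}{\pi_j}+\frac{a_{ji}}{\pi_i}\Bigr) z_i z_j\Bigr].$$
It remains to bound $S$ from below in each of the two alternative regimes of (H4).

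Under the detailed-balance condition \eqref{1.dbc}, I would set $b_{ij}:=a_{ij}/\pi_j=a_{ji}/\pi_i$ (symmetric in $i,j$) and observe that $a_{ij}/\pi_i=(\pi_j/\pi_i)b_{ij}$, $a_{ji}/\pi_j=(\pi_i/\pi_j)b_{ij}$, and $a_{ij}/\pi_j+a_{ji}/\pi_i=2b_{ij}$. Substituting, each summand in $S$ becomes a perfect square,
$$S=\sum_{i<j} b_{ij}\, u_i u_j\Bigl(\sqrt{\pi_j/\pi_i}\,z_i + \sqrt{\pi_i/\pi_j}\,z_j\Bigr)^{\!2}\ge 0,$$
and can be discarded, immediately yielding the claim with $\alpha_0=\min_i a_{i0}/\pi_i$ and $\eta_0=\min_i a_{ii}/\pi_i$.

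Under the weak cross-diffusion condition \eqref{1.wcd} (so $\pi_i=1$), I would instead use the elementary identity
$$a z_i^2 + b z_j^2 + (a+b)z_i z_j = \bigl(\sqrt a\,z_i + \sqrt b\,z_j\bigr)^{\!2} + (\sqrt a-\sqrt b)^2 z_i z_j$$
with $a=a_{ij}$, $b=a_{ji}$. Dropping the perfect square in $S$ and applying the weighted Young estimate $u_i u_j|z_i z_j|\le\tfrac12(u_i^2 z_i^2+u_j^2 z_j^2)$ to each indefinite remainder, the sum over $i<j$ symmetrizes (using that $(\sqrt{a_{ij}}-\sqrt{a_{ji}})^2$ is symmetric in $i,j$) to $-\tfrac12\sum_i u_i^2 z_i^2\sum_j(\sqrt{a_{ij}}-\sqrt{a_{ji}})^2$. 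Combining this with the $2\sum_i a_{ii}u_i^2 z_i^2$ already secured from the diagonal part produces exactly the bracket $a_{ii}-\tfrac14\sum_j(\sqrt{a_{ij}}-\sqrt{a_{ji}})^2$, bounded below by $\eta$.

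The main obstacle is arranging the pairing in the weak-cross-diffusion case so that the exact factor $\tfrac14$ in \eqref{1.wcd} emerges. Bounding $|z_i z_j|\le\tfrac12(z_i^2+z_j^2)$ and $u_i u_j\le\tfrac12(u_i^2+u_j^2)$ separately would produce spurious mixed contributions like $u_i^2 z_j^2$ that cannot be absorbed into the diagonal $u_i^2 z_i^2$ controlled by $\eta$; the point is to apply Young's inequality \emph{simultaneously} in $u$ and $z$ via $u_i u_j|z_i z_j|\le\tfrac12(u_i^2 z_i^2+u_j^2 z_j^2)$. The detailed-balance case is by comparison essentially a direct completion of squares, once one notices the symmetry of $b_{ij}$.
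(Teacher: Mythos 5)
Your proof is correct, and the direct expansion of the quadratic form followed by a completion of squares (detailed balance) or a simultaneous Young-type estimate $u_iu_j|z_iz_j|\le\tfrac12(u_i^2z_i^2+u_j^2z_j^2)$ (weak cross-diffusion) is exactly the argument used in \cite{CDJ18}, from which the paper quotes this lemma without proof. You have also correctly supplied the factor $\pi_j^{-1}$ in the quadratic form $\sum_{i,j}A_{ij}(u)u_j\pi_j^{-1}z_iz_j$ coming from $h''(u)^{-1}=\operatorname{diag}(u_i/\pi_i)$; this factor is omitted (an apparent typo) in the paper's displayed middle equality, but it is needed to match the stated constants $\alpha_0=\min_i\pi_i^{-1}a_{i0}$ and $\eta_0=\min_i\pi_i^{-1}a_{ii}$.
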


The weak formulation \eqref{1.renorm} is valid for test functions
$\phi\in C_0^\infty(\overline\Omega\times[0,T))$. We wish to allow for
test functions in $C^\infty(\overline\Omega\times[0,s])$ for some $s\in(0,T)$.

\begin{lemma}\label{lem.renorm}
Let $u$ be a renormalized solution to \eqref{1.eq}-\eqref{1.A} and let $s\in(0,T)$.
Then for any $\xi\in C^\infty([0,\infty)^n)$ with
$\xi'\in C_0^\infty([0,\infty)^n;\R^n)$ and all $\phi\in
C^\infty(\overline\Omega\times[0,s])$,
\begin{align}
  -\int_0^s\int_\Omega&\xi(u)\pa_t\phi dxdt
	+ \int_\Omega\xi(u(x,s))\phi(x,s)dx
	- \int_\Omega\xi(u^0(x))\phi(x,0)dx \nonumber \\
  &= -\sum_{i,k=1}^n\int_0^s\int_\Omega\phi\pa_i\pa_k\xi(u)\bigg(
	\sum_{j=1}^nA_{ij}(u)\na u_j - u_ib_i\bigg)\cdot\na u_kdxdt \nonumber \\
  &\phantom{xx}{}- \sum_{i=1}^n\int_0^s\int_\Omega\pa_i\xi(u)\bigg(
	\sum_{j=1}^nA_{ij}(u)\na u_j - u_ib_i\bigg)\cdot\na \phi dxdt \nonumber \\
  &\phantom{xx}{}+ \sum_{i=1}^n\int_0^s\int_\Omega\phi\pa_i\xi(u)f_i(u) dxdt.
	\label{2.renorm}
\end{align}
This expression also holds for all
$\phi\in W^{1,p}(\Omega\times(0,s))$ with $p>d+1$.
\end{lemma}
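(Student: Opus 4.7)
The plan is to deduce \eqref{2.renorm} in two stages. First I extend the class of admissible test functions from $C_0^\infty(\overline\Omega\times[0,T))$ to $C^\infty(\overline\Omega\times[0,s])$ by means of a temporal cutoff, and then I upgrade further to $W^{1,p}(\Omega\times(0,s))$ with $p>d+1$ by density.

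For the first stage, given $\phi\in C^\infty(\overline\Omega\times[0,s])$ I extend it to $\tilde\phi\in C^\infty(\overline\Omega\times[0,T])$ (e.g.\ by Seeley's extension in $t$), fix $\delta\in(0,T-s)$, and pick a smooth $\chi_\delta\in C^\infty([0,T])$ with $\chi_\delta\equiv 1$ on $[0,s]$ and $\chi_\delta\equiv 0$ on $[s+\delta,T]$. The function $\phi_\delta:=\tilde\phi\,\chi_\delta$ qualifies as a test function in \eqref{1.renorm}. Since $\pa_t\phi_\delta=\chi_\delta\,\pa_t\tilde\phi+\chi_\delta'\,\tilde\phi$ and $\chi_\delta\to\mathbf{1}_{[0,s]}$ boundedly and pointwise as $\delta\to 0$, dominated convergence handles every term produced by \eqref{1.renorm} applied to $\phi_\delta$ except
$$
  -\int_0^T\int_\Omega \xi(u)\,\tilde\phi\,\chi_\delta'(t)\,dx\,dt.
$$
This integral concentrates on the slab $[s,s+\delta]$ with total weight $\int_0^T\chi_\delta'(t)\,dt=-1$, so its limit equals the desired boundary term $\int_\Omega\xi(u(x,s))\phi(x,s)\,dx$ provided the scalar map $F(t):=\int_\Omega\xi(u(t))\tilde\phi(x,t)\,dx$ admits a continuous representative at $t=s$.

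To secure this time continuity I test \eqref{1.renorm} with tensor-product functions $\psi(x)\zeta(t)$, $\psi\in C^\infty(\overline\Omega)$, $\zeta\in C_c^\infty(0,T)$, which yields, in $\mathcal{D}'(0,T)$,
$$
  \frac{d}{dt}\int_\Omega\xi(u(t))\psi(x)\,dx = R_\psi(t),
$$
where $R_\psi$ gathers the spatial diffusion, drift and reaction integrals from the right-hand side of \eqref{1.renorm}. Each piece of $R_\psi$ lies in $L^1(0,T)$: the diffusion and drift contributions are bounded by constants times $\|\na u\|_{L^2(\Omega)}^2$ and $\|\na u\|_{L^2(\Omega)}$ respectively, using that $\pa_i\xi$ and $\pa_i\pa_k\xi$ have compact support in $u$ on which $A_{ij}(u)$ is bounded; the reaction integral is even essentially bounded, since on that same support $f_i$ remains in a compact set by (H2.i). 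Hence $t\mapsto\int_\Omega\xi(u(t))\psi\,dx\in W^{1,1}(0,T)\subset C([0,T])$. Because $\xi$ is globally bounded (constant outside the compact support of $\xi'$), $\{\xi(u(t))\}_t$ is uniformly bounded in $L^\infty(\Omega)$, and a standard $3\eps$ approximation upgrades this continuity to arbitrary test densities in $L^1(\Omega)$, in particular to $\tilde\phi(\cdot,s)$. Combining with uniform continuity of $\tilde\phi$ in $t$ then yields continuity of $F$ at $t=s$, completing the first stage.

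For the second stage, the hypothesis $p>d+1$ (with the ambient space-time dimension being $d+1$) together with Sobolev embedding provides $W^{1,p}(\Omega\times(0,s))\hookrightarrow C^{0,\alpha}(\overline\Omega\times[0,s])$. Any $\phi\in W^{1,p}$ can therefore be approximated by smooth $\phi_k\in C^\infty(\overline\Omega\times[0,s])$ converging to $\phi$ simultaneously in $W^{1,p}$ and uniformly on $\overline\Omega\times[0,s]$. Applying the just-proved version of \eqref{2.renorm} to each $\phi_k$ and passing to the limit $k\to\infty$, the coefficient factors $\pa_i\xi(u)\big(\sum_j A_{ij}(u)\na u_j-u_ib_i\big)$ and $\pa_i\pa_k\xi(u)\big(\sum_j A_{ij}(u)\na u_j-u_ib_i\big)\cdot\na u_k$ lie in $L^2(\Omega\times(0,s))\subset L^{p'}(\Omega\times(0,s))$ (since $p\ge 2$), which pairs by duality with $\na\phi_k\to\na\phi$ in $L^p$, while the initial, terminal, and reaction integrals pair uniformly convergent $\phi_k$ against $L^\infty$ or $L^1$ integrands. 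The principal technical obstacle is the time-continuity step: once a continuous representative $t\mapsto\int_\Omega\xi(u(t))\tilde\phi(x,t)\,dx$ is in hand, both the $\delta\to 0$ passage and the $W^{1,p}$ density argument reduce to routine estimates.
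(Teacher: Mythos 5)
Your argument is correct and follows the standard route that the paper defers to (temporal cutoff $\chi_\delta$ plus the $W^{1,1}(0,T)\subset C([0,T])$ regularity of $t\mapsto\int_\Omega\xi(u(t))\psi\,dx$ to make sense of the boundary term at $t=s$, then density for $W^{1,p}$ test functions), and the key observation that $\xi'$, $\xi''$ have compact support — so $A_{ij}(u)$, $u_i$, $f_i(u)$ are bounded on that support and the right-hand side of \eqref{1.renorm} is in $L^1(0,T)$ — is exactly what makes the continuity step go through. One small slip in the last paragraph: the quantity $\pa_i\pa_k\xi(u)\big(\sum_j A_{ij}(u)\na u_j-u_ib_i\big)\cdot\na u_k$ is only in $L^1(\Omega\times(0,s))$, not $L^2$ (it is quadratic in $\na u$), and it is paired with $\phi_k$, not $\na\phi_k$; this causes no problem since $\phi_k\to\phi$ uniformly by the Sobolev embedding, but the $L^2$/$L^{p'}$ duality argument you invoke applies only to the first-order coefficient $\pa_i\xi(u)\big(\sum_j A_{ij}(u)\na u_j-u_ib_i\big)$ paired with $\na\phi_k$.
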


The proof of the lemma is the same as in step 1 of the proof of Lemma 11 in \cite{ChJu17}.

To define the cutoff function,
let $\varphi\in C^\infty(\R)$ be a nonincreasing
function satisfying $\varphi(x)=1$ for $x\le 0$ and $\varphi(x)=0$ for $x\ge 1$
and let $K$, $L\in\N$ with $K\ge 3$. We define
\begin{equation}\label{def.phi}
   \varphi_K^L(v) := \varphi\bigg(\frac{\log\log(\sum_{k=1}^n v_k+e)
	- \log\log(L+e)}{\log(K+1)}\bigg) \quad\mbox{for }v\in[0,\infty)^n,
\end{equation}
where $e=\exp(1)$ is the Euler number.
This function has the following properties.

\begin{lemma}\label{lem.varphi}
It holds $\varphi_K^L\in C_0^\infty([0,\infty)^n)$. Let $v\in[0,\infty)^n$. Then
\begin{enumerate}
\item[\rm (L1)] $0\le\varphi_K^L(v)\le 1$ for $v\in[0,\infty)^n$.
\item[\rm (L2)] If $\sum_{k=1}^n v_k\le L$ then $\varphi_K^L(v)=1$.
\item[\rm (L3)] If $\sum_{k=1}^n v_k> (L+e)^{K+1}$ then $\varphi_K^L(v)=0$.
\item[\rm (L4)] There exists $C>0$ such that for $v\in[0,\infty)^n$ and $j=1,\ldots,n$,
$$
  |\pa_j\varphi_K^L(v)| \le \frac{C}{\log(K+1)(\sum_{k=1}^n v_k+e)
	\log(\sum_{k=1}^n v_k+e)}.
$$
\item[\rm (L5)] There exists $C>0$ such that for $v\in[0,\infty)^n$ and $i,j=1,\ldots,n$,
$$
  |\pa_i\pa_j\varphi_K^L(v)| \le \frac{C}{\log(K+1)(\sum_{k=1}^n v_k+e)^2
	\log(\sum_{k=1}^n v_k+e)}.
$$
\end{enumerate}
\end{lemma}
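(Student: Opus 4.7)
The plan is to reduce everything to direct computations using the chain rule, after introducing the abbreviations $S(v):=\sum_{k=1}^n v_k+e\ge e$ and
$$
  g(v):=\frac{\log\log S(v)-\log\log(L+e)}{\log(K+1)},
$$
so that $\varphi_K^L=\varphi\circ g$. Since $S(v)\ge e$ on $[0,\infty)^n$, the quantities $\log S$ and $\log\log S$ are smooth and satisfy $\log S\ge 1$ and $\log\log S\ge 0$. Composition with $\varphi\in C^\infty(\R)$ gives $\varphi_K^L\in C^\infty([0,\infty)^n)$, and the compact support will follow from property (L3).

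I would verify (L1)--(L3) first because they are essentially qualitative. (L1) is immediate from the fact that $\varphi$ takes values in $[0,1]$. For (L2), if $\sum_k v_k\le L$ then $S(v)\le L+e$, so $g(v)\le 0$ and $\varphi(g(v))=1$. For (L3), if $\sum_k v_k>(L+e)^{K+1}$ then $\log S(v)>(K+1)\log(L+e)$, whence $\log\log S(v)>\log(K+1)+\log\log(L+e)$ and therefore $g(v)>1$, which forces $\varphi(g(v))=0$. Together with (L3) this also shows that $\varphi_K^L$ has support contained in $\{v\in[0,\infty)^n:\sum_k v_k\le (L+e)^{K+1}\}$, giving the $C_0^\infty$ conclusion.

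The differential estimates (L4) and (L5) follow from the chain rule applied to $\varphi\circ g$. A short computation yields
$$
  \pa_j g(v)=\frac{1}{\log(K+1)\,S(v)\log S(v)},
$$
independent of $j$, so that $\pa_j\varphi_K^L=\varphi'(g)\pa_j g$ together with $\|\varphi'\|_\infty<\infty$ delivers (L4). For (L5) I would differentiate once more,
$$
  \pa_i\pa_j\varphi_K^L(v)=\varphi''(g(v))\pa_i g(v)\pa_j g(v)+\varphi'(g(v))\pa_i\pa_j g(v),
$$
and compute
$$
  \pa_i\pa_j g(v)=-\frac{\log S(v)+1}{\log(K+1)\,S(v)^2\log^2 S(v)}.
$$
Using $\log S\ge 1$ to absorb $\log S+1\le 2\log S$, each of the two contributions is bounded by $C/[\log(K+1)\,S(v)^2\log S(v)]$; the first one actually carries an extra factor $1/\log(K+1)$ but this is harmless.

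There is no real obstacle: the only subtle point is to keep track of which factor of $\log S$ or $\log(K+1)$ one loses when differentiating, and to exploit $\log S(v)\ge 1$ uniformly on $[0,\infty)^n$, which is exactly what the inner $\log\log$ in the definition of $\varphi_K^L$ was engineered to guarantee. This uniform lower bound is also what makes the estimates in (L4)--(L5) sharper than those of the single-logarithm cutoff $\widetilde\varphi_K^L$ used in \cite{Fis17}, as noted in section \ref{sec.ideas}.
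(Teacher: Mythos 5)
Your proposal is correct and follows essentially the same route as the paper: (L1)--(L3) by monotonicity of the logarithms, and (L4)--(L5) by explicitly differentiating the composition $\varphi\circ g$ and absorbing the extra factors via $\log(\sum_k v_k+e)\ge 1$ and $\log(K+1)>1$. Your computed first and second derivatives of the argument agree with the expressions in the paper's proof, so nothing further is needed.
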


\begin{proof}
If $\sum_{k=1}^nv_k\le L$ then the argument of $\varphi$ in definition \eqref{def.phi}
is negative which implies that $\varphi_K^L(v)=0$, proving (L2). Next,
$\varphi_K^L(v)=0$ holds if and only if the argument of $\varphi$ is equal or larger
than one which is equivalent to
$$
  \log\frac{\log(\sum_{k=1}^n v_k+e)}{\log(L+e)}
	= \log\log\bigg(\sum_{k=1}^n v_k+e\bigg) - \log\log(L+e) \ge \log(K+1),
$$
and, after taking the exponential, to $\sum_{k=1}^n v_k+e\ge (L+e)^{K+1}$.
This holds true since we assumed that $\sum_{k=1}^n v_k> (L+e)^{K+1}$, showing (L3).
Finally, (L4) and (L5) follow from
\begin{align*}
  \pa_j\varphi_K^L(v) &= \frac{\varphi'(z)}{\log(K+1)(\sum_{k+1}^nv_k+e)
	\log(\sum_{k=1}^n+e)}, \\
	\pa_i\pa_j\varphi_K^L(v) &= \frac{\varphi''(z)}{(\log(K+1))^2(\sum_{k+1}^nv_k+e)^2
	(\log(\sum_{k=1}^n+e))^2} \\
	&\phantom{xx}{}- \frac{\varphi'(z)}{\log(K+1)(\sum_{k+1}^nv_k+e)^2\log(\sum_{k=1}^n+e)}
	\\
	&\phantom{xx}{}- \frac{\varphi'(z)}{\log(K+1)(\sum_{k+1}^nv_k+e)^2
	(\log(\sum_{k=1}^n+e))^2},
\end{align*}
where $z$ is the argument of $\varphi$ in definition \eqref{def.phi},
since $\log(K+1)>1$.
\end{proof}

%%%%%%%%%%%%%%%%%%%%%%%%%%%%%%%%%%%%%%%%%%%%%%%%%%%%%%%%%%%%%%%%%%%%%%%%%%%%%%%

\section{Proof of Theorem \ref{thm.ws}}\label{sec.proof}

Without loss of generality, we prove Theorem \ref{thm.ws} by setting $\pi_i=1$.
This is not a restriction since these numbers only appear when applying
Lemma \ref{lem.psd} and do not change the analysis.
We split the proof into several steps.

\subsection{Approximate entropy identity for $H_{K,\eps}^{M,L}$}

We derive an integrated analog of the entropy identity \eqref{1.dHdt}
for the approximate entropy with cutoff \eqref{1.enteps}.
We choose $\phi\equiv 1$ and
$$
  \xi(u) = \varphi_K^M(u+\eps I)\sum_{i=1}^n\Big((u_i+\eps)\big(\log(u_i+\eps)
	+ \lambda_i - 1\big) + e^{-\lambda_i}\Big)
$$
in \eqref{2.renorm}, where $\eps\in(0,1/2)$ and we recall that $I=(1,\ldots,1)\in\R^n$.
Clearly, the derivative $\xi'$ is an element of $C_0^\infty([0,\infty)^n;\R^n)$,
as required. This gives the following identity for $s\in(0,T)$:
\begin{equation}\label{2.H1}
  \int_\Omega \varphi_K^M(u+\eps I)\bigg(\sum_{i=1}^n(u_i+\eps)\big(\log(u_i+\eps)
	+ \lambda_i - 1\big) + e^{-\lambda_i}\bigg)dx\bigg|_0^s
	= G_1+\cdots+G_6,
\end{equation}
where
\begin{align*}
	G_1 &= -\sum_{i=1}^n\int_0^s\int_\Omega\varphi^M_K(u+\eps I)\bigg(
	\sum_{j=1}^nA_{ij}(u)\na u_j - u_ib_i\bigg)\cdot\frac{\na u_i}{u_i+\epsilon}dxdt, \\
  G_2 &= -\sum_{i,k=1}^n\int_0^s\int_\Omega\pa_i\pa_k\varphi^M_K(u+\eps I)
	\sum_{\ell=1}^n\Big((u_\ell+\epsilon)\big(\log(u_\ell+\eps)+\lambda_\ell-1\big)
	+ e^{-\lambda_\ell}\Big) \\
  &\phantom{xxxx}{}\times\bigg(\sum_{j=1}^nA_{ij}(u)\na u_j - u_ib_i\bigg)
	\cdot\na u_k dxdt, \\
  G_3 &= -\sum_{i,k=1}^n\int_0^s\int_\Omega\pa_k\varphi^M_K(u+\eps I)
	\big(\log (u_i+\epsilon)+\lambda_i\big)\bigg(\sum_{j=1}^nA_{ij}(u)\na u_j - u_ib_i
	\bigg)\cdot\na u_kdxdt, \\
  G_4 &= -\sum_{i,k=1}^n\int_0^s\int_\Omega\pa_i\varphi^M_K(u+\eps I)
	\big(\log (u_k+\eps)+\lambda_k\big)\bigg(\sum_{j=1}^nA_{ij}(u)\na u_j - u_ib_i\bigg)
	\cdot\na u_kdxdt, \\
  G_5 &= \sum_{i=1}^n\int_0^s\int_\Omega\pa_i\varphi^M_K(u+\eps I)
	\sum_{\ell=1}^n\bigg((u_\ell+\eps)\big(\log(u_\ell+\eps)+\lambda_\ell-1\big)
	+ e^{-\lambda_\ell}\bigg)f_i(u)dxdt, \\
  G_6 &= \sum_{i=1}^n\int_0^s\int_\Omega\varphi^M_K(u+\eps I)
	\big(\log (u_i+\eps)+\lambda_i\big)f_i(u)dxdt.
\end{align*}

Next, we choose $\phi=\log v_i+\lambda_i\in W^{1,\infty}(\Omega\times(0,s))$ and
$\xi(u) = (u_i+\eps)\varphi_K^L(u+\eps I)$ in \eqref{2.renorm}. Then
\begin{align}
  &\int_\Omega (u_i+\eps)\varphi^L_K(u+\eps I)(\log v_i+\lambda_i) dx\Big|_0^s
	- \int_0^s\int_\Omega \frac{u_i+\eps}{v_i}\varphi^L_K(u+\eps I)\pa_tv_i dxdt
	\nonumber \\
  &= -\sum_{j=1}^n\int_0^s\int_\Omega \pa_j\varphi^L_K(u+\eps I)(\log v_i+\lambda_i)
	\bigg(\sum_{\ell=1}^nA_{j\ell}(u)\na u_\ell-u_jb_j\bigg)\cdot\na u_idxdt \nonumber \\
  &\phantom{xx}{} -\sum_{j=1}^n\int_0^s\int_\Omega \pa_j\varphi^L_K(u+\eps I)
	(\log v_i+\lambda_i)\bigg(\sum_{\ell=1}^nA_{i\ell}(u)\na u_\ell-u_ib_i\bigg)
	\cdot\na u_jdxdt \nonumber \\
  &\phantom{xx}{} -\sum_{j,k=1}^n\int_0^s\int_\Omega (u_i+\eps)
	\pa_j\pa_k\varphi^L_K(u+\eps I)(\log v_i+\lambda_i) \nonumber \\
	&\phantom{xxxx}{}\times\bigg(\sum_{\ell=1}^n
	A_{j\ell}(u)\na u_\ell - u_jb_j\bigg)\cdot\na u_kdxdt \nonumber \\
  &\phantom{xx}{} -\int_0^s\int_\Omega \varphi^L_K(u+\eps I)\bigg(
	\sum_{\ell=1}^nA_{i\ell}(u)\na u_\ell - u_ib_i\bigg)\cdot\frac{\na v_i}{v_i}dxdt
	\nonumber \\
  &\phantom{xx}{} -\sum_{j=1}^n\int_0^s\int_\Omega (u_i+\eps)
	\pa_j\varphi^L_K(u+\eps I)\bigg(\sum_{\ell=1}^nA_{j\ell}(u)\na u_\ell - u_jb_j\bigg)
	\cdot\frac{\na v_i}{v_i}dxdt \nonumber \\
  &\phantom{xx}{} +\int_0^s\int_\Omega \varphi^L_K(u+\eps I)(\log v_i+\lambda_i)
	f_i(u)dxdt \nonumber \\
  &\phantom{xx}{} +\sum_{j=1}^n\int_0^s\int_\Omega (u_i+\eps)
	\pa_j\varphi^L_K(u+\eps I)(\log v_i+\lambda_i)f_j(u)dxdt. \label{2.aux}
\end{align}
We wish to replace the second integral on the left-hand side. For this, we choose
the test function $\phi=(u_i+\eps)\varphi_K^L(u+\eps I)/v_i-1\in L^2(0,s;H^1(\Omega))$
in the weak formulation \eqref{1.strong} for $v$, giving
\begin{align*}
  \int_0^s & \int_\Omega \frac{u_i+\eps}{v_i}\varphi^L_K(u+\eps I)\pa_tv_i dxdt
	- \int_\Omega {v_i}dx\Big|_0^s \\
  &= -\int_0^s\int_\Omega \frac{\varphi^L_K(u+\eps I)}{v_i}
	\bigg(\sum_{\ell=1}^nA_{i\ell}(v)\na v_\ell - v_ib_i\bigg)\cdot\na u_i dxdt \\
  &\phantom{xx}{} -\sum_{j=1}^n\int_0^s\int_\Omega\frac{u_i+\eps}{v_i}
	\pa_j\varphi^L_K(u+\eps I)\bigg(\sum_{\ell=1}^nA_{i\ell}(v)\na v_\ell - v_ib_i\bigg)
	\cdot\na u_j dxdt \\
  &\phantom{xx}{} +\int_0^s\int_\Omega\frac{u_i+\eps}{v_i^2}\varphi^L_K(u+\eps I)
	\bigg(\sum_{\ell=1}^nA_{i\ell}(v)\na v_\ell - v_ib_i\bigg)\cdot\na v_i dxdt \\
  &\phantom{xx}{}+\int_0^s\int_\Omega \bigg(\frac{u_i+\eps}{v_i}\varphi^L_K(u+\eps I)
	- 1\bigg)f_i(v)dxdt.
\end{align*}
Then, replacing the second integral on the left-hand side of \eqref{2.aux} by
the previous expression, summing the resulting equation over $i=1,\ldots,n$ and
multiplying it by $-1$, we obtain
\begin{equation}\label{2.H2}
  -\int_\Omega \sum_{i=1}^n\Big(\varphi^L_K(u+\eps I)(u_i+\eps)(\log v_i+\lambda_i)
	- v_i\Big)dx\Big|_0^s =: I_1+\cdots+I_{12},
\end{equation}
where
\begin{align*}
  I_1 &= \sum_{i,j=1}^n\int_0^s\int_\Omega \pa_j\varphi^L_K(u+\eps I)(\log v_i+\lambda_i)
	\bigg(\sum_{\ell=1}^nA_{j\ell}(u)\na u_\ell - u_jb_j\bigg)\cdot\na u_idxdt, \\
  I_2 &= \sum_{i,j=1}^n\int_0^s\int_\Omega \pa_j\varphi^L_K(u+\eps I)(\log v_i+\lambda_i)
	\bigg(\sum_{\ell=1}^nA_{i\ell}(u)\na u_\ell - u_ib_i\bigg)\cdot\na u_jdxdt, \\
  I_3 &= \sum_{i,j,k=1}^n\int_0^s\int_\Omega (u_i+\eps)\pa_j\pa_k\varphi^L_K
	(u+\eps I)(\log v_i+\lambda_i)\bigg(\sum_{\ell=1}^nA_{j\ell}(u)\na u_\ell - u_jb_j\bigg)
	\cdot\na u_kdxdt, \\
  I_4 &= \sum_{i=1}^n\int_0^s\int_\Omega \varphi^L_K(u+\eps I)
	\sum_{\ell=1}^nA_{i\ell}(u)\na u_\ell\cdot\frac{\na v_i}{v_i}dxdt, \\
  I_5 &= \sum_{i,j=1}^n\int_0^s\int_\Omega (u_i+\eps)\pa_j\varphi^L_K(u+\eps I)
	\bigg(\sum_{\ell=1}^nA_{j\ell}(u)\na u_\ell - u_jb_j\bigg)
	\cdot\frac{\na v_i}{v_i}dxdt, \\
  I_6 &= \sum_{i=1}^n\int_0^s\int_\Omega \varphi^L_K(u+\eps I)\bigg(
	\sum_{\ell=1}^nA_{i\ell}(v)\na v_\ell - v_ib_i\bigg)\cdot\frac{\na u_i}{v_i}dxdt, \\
  I_7 &= \sum_{i,j=1}^n\int_0^s\int_\Omega (u_i+\eps)\pa_j\varphi^L_K(u+\eps I)\bigg(
	\sum_{\ell=1}^nA_{i\ell}(v)\na v_\ell - v_ib_i\bigg)\cdot\frac{\na u_j}{v_i}dxdt, \\
  I_8 &= -\sum_{i=1}^n\int_0^s\int_\Omega \varphi^L_K(u+\eps I)(\log v_i+\lambda_i)
	f_i(u)dxdt, \\
  I_9 &= -\sum_{i,j=1}^n\int_0^s\int_\Omega (u_i+\eps)\pa_j\varphi^L_K(u+\eps I)
	(\log v_i+\lambda_i)f_j(u)dxdt, \\
  I_{10} &= -\sum_{i=1}^n\int_0^s\int_\Omega (u_i+\eps)\varphi^L_K(u+\eps I)
	\sum_{\ell=1}^nA_{i\ell}(v)\na v_\ell\cdot\frac{\na v_i}{v_i^2}dxdt, \\
  I_{11} &= -\sum_{i=1}^n\int_0^s\int_\Omega \bigg(\frac{u_i+\eps}{v_i}
	\varphi^L_K(u+\eps I) - 1\bigg)f_i(v)dxdt, \\
  I_{12} &= \eps\sum_{i=1}^n\int_0^s\int_\Omega \varphi^L_K(u+\eps I)
	\frac{b_i\cdot\na v_i}{v_i}dxdt.
\end{align*}

Adding \eqref{2.H1} and \eqref{2.H2} gives the desired approximated
entropy identity:
\begin{equation}\label{2.Heps}
  H_{K,\eps}^{M,L}(u|v)\Big|_0^s
	+ \sum_{i=1}^n e^{-\lambda_i}\int_\Omega\varphi_K^M(u+\eps I)dx\Big|_0^s
	= G_1+\cdots+G_6+I_1+\cdots+I_{12}.
\end{equation}

%%%%%%%%%%%%%%%%%%%%%%%%%

\subsection{Estimate of the reaction part}

We start by estimating the terms in \eqref{2.Heps} involving the reaction
terms $f_i(u)$, namely $G_6$, $I_8$, $I_9$, and $I_{11}$
(the remaining term $G_5$ will be treated later
when we pass to the limits $\eps\to 0$ and $M\to\infty$).

We split the integral $G_6$ into two parts:
\begin{align*}
  G_6 &= \int_0^s\int_\Omega\chi_{\{\sum_{\ell=1}^n(u_\ell+\eps)>L\}}
	\varphi^M_K(u+\eps I)\sum_{i=1}^n f_i(u)\big(\log (u_i+\eps)+\lambda_i\big)dxdt \\
  &\phantom{xx}{} +\int_0^s\int_\Omega\chi_{\{\sum_{\ell=1}^n(u_\ell+\eps)\leq L\}}
	\varphi^M_K(u+\eps I)\sum_{i=1}^n f_i(u)\big(\log (u_i+\eps)+\lambda_i\big)dxdt \\
  &=: G_{61} + G_{62},
\end{align*}
where $\chi_A$ is the characteristic function on the set $A$.
Adding and subtracting the term $f_i(u+\eps I)$ and using condition (H2.ii) gives
\begin{align*}
  G_{61} &= \int_0^s\int_\Omega\chi_{\{\sum_{\ell=1}^n(u_\ell+\eps)>L\}}
	\varphi^M_K(u+\eps I)\sum_{i=1}^nf_i(u+\eps I)\big(\log (u_i+\eps)+\lambda_i\big)
	dxdt \\
  &\phantom{xx}{} +\int_0^s\int_\Omega\chi_{\{\sum_{\ell=1}^n(u_\ell+\epsilon)> L\}}
	\varphi^M_K(u+\eps I)\sum_{i=1}^n\big(f_i(u)-f_i(u+\eps I)\big)
	\big(\log (u_i+\eps)+\lambda_i\big)dxdt \\
  &\le \int_0^s\int_\Omega\chi_{\{\sum_{\ell=1}^n(u_\ell+\eps)> L\}}
	\varphi^M_K(u+\eps I)\sum_{i=1}^n\big(f_i(u)-f_i(u+\eps I)\big)
	\big(\log (u_i+\eps)+\lambda_i\big)dxdt.
\end{align*}

We claim that for any $K>0$, there exists $C(K)>0$
such that for all $0\le s\le K$, it holds that $|\log(s+\eps)|\le C(K)(1-\log\eps)$
(recall that $\eps<1/2$). Indeed, let $1/2\le s\le K$. Then
$\log \frac12\le\log(s+\eps)\le\log(K+1)$ and consequently $|\log(s+\eps)|\le C(K)$
for $C(K)=\max\{\log 2,\log(K+1)\}$. If $0\le s\le \frac12$, we find that
$|\log(s+\eps)|=-\log(s+\eps)\le-\log\eps$, which shows the claim.

We know from (L3) that $\varphi_K^M(u+\eps I)$ vanishes if $\sum_{\ell=1}^n u_\ell$
is large enough. This allows us to apply the local Lipschitz continuity of $f_i$
from (H2). Therefore, using (L1), we infer that
\begin{equation}\label{2.G61}
  G_{61} \le C(M,K,f)\eps(1-\log\eps).
\end{equation}

For $G_{62}$, we observe that $M>L$ and (L2) imply that
$\varphi_K^M(u+\eps I)=1$ in $\{\sum_{\ell=1}^n(u_\ell+\eps)\le L\}$. Hence,
$$
  G_{62} = \int_0^s\int_\Omega\chi_{\{\sum_{\ell=1}^n(u_\ell+\eps)\leq L\}}
	\sum_{i=1}^n f_i(u)\big(\log (u_i+\eps)+\lambda_i\big)dxdt.
$$
We wish to estimate this term together with the terms $I_8$, $I_9$, and $I_{11}$.
Consider the integrands of $G_{62}$, $I_8$, and $I_{11}$ in the set
$\{\sum_{\ell=1}^n (u_\ell+\eps)\le L\}$ (where it holds that $\varphi_K^L(u+\eps I)=1$):
\begin{align*}
  f_i(u)&\big(\log (u_i+\eps)+\lambda_i\big) - f_i(u)\varphi^L_K(u+\eps I)
	(\log v_i+\lambda_i)
  - f_i(v)\bigg(\frac{u_i+\eps}{v_i}\varphi^L_K(u+\eps I) - 1\bigg) \\
  &= f_i(u)\log \frac{u_i+\epsilon}{v_i} - f_i(v)\bigg(\frac{u_i+\eps}{v_i} - 1\bigg) \\
  &= f_i(u)\bigg(\log\frac{u_i+\eps}{v_i } -\frac{u_i+\eps}{v_i} + 1\bigg)
	+ \big(f_i(u)-f_i(v)\big)\bigg(\frac{u_i+\epsilon}{v_i} - 1\bigg).
\end{align*}
Therefore, we need to estimate
\begin{align*}
  G_{62} &+ I_8 + I_9 + I_{11} \\
  &\le \sum_{i=1}^n\int_0^s\int_\Omega\chi_{\{\sum_{\ell=1}^n(u_\ell+\eps)\leq L\}}
	f_i(u)\bigg(\log\frac{u_i+\eps}{v_i}-\frac{u_i+\eps}{v_i} + 1\bigg)dxdt \\
  &\phantom{xx}{} +\sum_{i=1}^n\int_0^s\int_\Omega
	\chi_{\{\sum_{\ell=1}^n(u_\ell+\eps)\leq L\}}
	\big(f_i(u)-f_i(v)\big)\bigg(\frac{u_i+\eps}{v_i} - 1\bigg)dxdt \\
  &\phantom{xx}{}+ \sum_{i=1}^n\int_0^s\int_\Omega
	\chi_{\{\sum_{\ell=1}^n(u_\ell+\eps)> L\}} \\
	&\phantom{xxxx}{}\times\bigg(|f_i(u)\varphi^L_K(u+\eps I)|
	|\log v_i+\lambda_i| + |f_i(v)|\bigg(\frac{u_i+\eps}{v_i} + 1\bigg)\bigg)dxdt \\
  &\phantom{xx}{} +\sum_{i,j=1}^n\int_0^s\int_\Omega\big|(u_i+\eps)
	\pa_j\varphi^L_K(u+\eps I)\big||f_j(u)||\log v_i+\lambda_i|dxdt \\
  &=: J_1+\cdots+J_4.
\end{align*}

We first consider $J_1$. The elementary inequalities
$-|s-1|^2\le \log s-s+1\le 0$ for $s\ge 1$ and $-|s-1|^2/s\le\log s-s+1\le 0$
for $s\in(0,1)$ imply that (as shown in \cite{Fis17})
\begin{equation}\label{2.ineq}
  -\bigg(1+\frac{1}{s}\bigg)|s-1|^2 \le \log s-s+1 \le 0\quad\mbox{for }s>0.
\end{equation}
Furthermore, we use the local Lipschitz continuity of $f_i$ and the quasi-positivity
property $f_i(u)\ge 0$ for all $u\in[0,\infty)^n$ with $u_i=0$ (as a consequence
of (H2.ii)) to conclude that
in the set $\{\sum_{\ell=1}^n u_\ell\le L\}$,
\begin{align*}
  -f_i(u) &\le f_i(u_1,\ldots,u_{i-1},0,u_{i+1},\ldots,u_n) - f_i(u) \\
  &\le |f_i(u_1,\ldots,u_{i-1},0,u_{i+1},\ldots,u_n) - f_i(u)|
	\le C(L,f_i)u_i.
\end{align*}
This allows us to estimate the integrand of $J_1$. Indeed, we obtain in
$\{\sum_{\ell=1}^n u_\ell\le L\}$
\begin{align*}
  f_i(u)&\bigg(\log\frac{u_i+\eps}{v_i} - \frac{u_i+\eps}{v_i} + 1\bigg)
  \le C(L,f_i)u_i\bigg(1+\frac{v_i}{u_i+\eps}\bigg)
	\bigg|\frac{u_i+\eps}{v_i}-1\bigg|^2 \\
  &= C(L,f_i)\bigg(u_i+\frac{u_i}{u_i+\eps}v_i\bigg)
	\frac{1}{v_i^2}\big|(u_i-v_i)+\eps\big|^2
  \le C(L,f_i,v_i)\big(|u_i-v_i|^2+\eps^2\big).
\end{align*}
This estimate also holds in $\{\sum_{\ell=1}^n (u_\ell+\eps)\le L\}$ for $\eps>0$
since $\{\sum_{\ell=1}^n( u_\ell+\eps)\le L\}$ is a subset of
$\{\sum_{\ell=1}^n u_\ell\le L\}$. We deduce that
\begin{align*}
  J_1
	%&\le \int_0^s\int_\Omega\chi_{\{\sum_{\ell=1}^n(u_\ell+\eps)\leq L\}}
	%\sum_{i=1}^n C(L,f_i,v_i)\big(|u_i-v_i|^2+\eps^2\big)dxdt \\
  &\le \int_0^s\int_\Omega\chi_{\{\sum_{\ell=1}^nu_\ell\leq L\}}
	\sum_{i=1}^n C(L,f_i,v_i)\big(|u_i-v_i|^2+\eps^2\big)dxdt \\
  &\le C(L,f,v)\int_0^s\int_\Omega\chi_{\{\sum_{\ell=1}^nu_\ell\leq L\}}
	\sum_{i=1}^n|u_i-v_i|^2dxdt + C(L,f,v,T,\Omega)\eps^2.
\end{align*}
Using again $\{\sum_{\ell=1}^n (u_\ell+\eps)\le L\}\subset\{\sum_{\ell=1}^n u_\ell\le L\}$
and the local Lipschitz continuity of $f_i$, it follows that
\begin{align*}
  J_2 &\le \sum_{i=1}^n\int_0^s\int_\Omega\chi_{\{\sum_{\ell=1}^nu_\ell\leq L\}}
	C(L,f_i,v_i)|u-v|\big(|u_i-v_i|+\eps\big)dxdt \\
  &\le C(L,f,v)\int_0^s\int_\Omega\chi_{\{\sum_{\ell=1}^nu_\ell\leq L\}}
	\sum_{i=1}^n|u_i-v_i|^2dxdt + C(L,f,v,T,\Omega)\eps.
\end{align*}
Taking into account (L3), we have $|f_i(u)\varphi^L_K(u+\epsilon I)|\leq C(L,K,f_i)$
and thus
$$
  J_3 \le C(L,K,f,v)\sum_{i=1}^n\int_0^s\int_\Omega
	\chi_{\{\sum_{\ell=1}^n(u_\ell+\eps)> L\}}\bigg(1+\sum_{i=1}^nu_i\bigg)dxdt.
$$
Since $\pa_j\varphi_K^L(u+\eps I)=0$ for sufficiently large $u$, we can estimate as
$$
  J_4 \le C(L,K,f,v)\int_0^s\int_\Omega\chi_{\{\sum_{\ell=1}^n(u_\ell+\eps)> L\}}dxdt.
$$
We conclude that
\begin{align}
  G_{6} &+ I_8 + I_9 + I_{11}
	\le C(L,f,v)\int_0^s\int_\Omega\chi_{\{\sum_{\ell=1}^nu_\ell\leq L\}}
	\sum_{i=1}^n|u_i-v_i|^2dxdt \nonumber \\
	&{}+ C(L,K,f,v)\sum_{i=1}^n\int_0^s\int_\Omega
	\chi_{\{\sum_{\ell=1}^n(u_\ell+\eps)> L\}}\bigg(1+\sum_{i=1}^nu_i\bigg)dxdt
	\nonumber \\
	&{}+ C(M,K,f)\eps(1-\log\eps)	+ C(L,f,v,T,\Omega)\eps. \label{2.conc1}
\end{align}

%%%%%%%%%%%%%%%%%%

\subsection{Estimate of the cross-diffusion part}

We estimate only some terms involving the diffusion coefficients, namely
$G_1$, $I_4$, $I_6$, and $I_{10}$.
We split $G_1=G_{11}+G_{12}$ in \eqref{2.H1} and $I_6=I_{61}+I_{61}$ in \eqref{2.H2}
into two parts, the cross-diffusion part and the drift part:
\begin{align*}
  G_{11} &= -\sum_{i,j=1}^n\int_0^s\int_\Omega\varphi^M_K(u+\eps I)A_{ij}(u)
	\na u_j\cdot\frac{\na u_i}{u_i+\eps}dxdt, \\
  G_{12} &= \sum_{i=1}^n\int_0^s\int_\Omega\varphi^M_K(u+\epsilon I)
	u_ib_i\cdot\frac{\na u_i}{u_i+\epsilon}dxdt, \\
	I_{61} &= \sum_{i,j=1}^n\int_0^s\int_\Omega\varphi_K^L(u+\eps I)A_{ij}(v)\na v_j
	\cdot\frac{\na u_i}{v_i}dxdt, \\
	I_{62} &= -\sum_{i=1}^n\int_0^s\int_\Omega\varphi_K^L(u+\eps I)v_ib_i
	\cdot\frac{\na u_i}{v_i}dxdt.
\end{align*}
We split $\Omega$ into the subsets $\{\sum_{\ell=1}^n(u_\ell+\eps)\leq L\}$ and
$\{\sum_{\ell=1}^n(u_\ell+\eps)>L\}$ and combine on the former set
the terms $G_{11}+I_4$ and $I_{61}+I_{10}$. This yields
\begin{align}
  G_{11} &+ I_4 + I_{61} + I_{10}
	= -\sum_{i,j=1}^n\int_0^s\int_\Omega \chi_{\{\sum_{\ell=1}^n(u_\ell+\eps)\leq L\}}
	\bigg\{A_{ij}(u)\na u_j\cdot\bigg(\frac{\na u_i}{u_i+\eps} - \frac{\na v_i}{v_i}\bigg)
	\nonumber \\
  &\phantom{xxxx}{} +A_{ij}(v)\na v_j\cdot\bigg(\frac{\na v_i}{v_i}\frac{u_i+\eps}{v_i}
	- \frac{\na u_i}{v_i}\bigg)\bigg\}dxdt \nonumber \\
  &\phantom{xx}{} -\sum_{i,j=1}^n\int_0^s\int_\Omega
	\chi_{\{\sum_{\ell=1}^n(u_\ell+\eps)> L\}}\varphi^M_K(u+\eps I)A_{ij}(u)
	\na u_j\cdot\frac{\na u_i}{u_i+\eps}dxdt \nonumber \\
  &\phantom{xx}{} +\sum_{i,j=1}^n\int_0^s\int_\Omega
	\chi_{\{\sum_{\ell=1}^n(u_\ell+\eps)> L\}}\varphi^L_K(u+\eps I)A_{ij}(u)
	\na u_j\cdot\frac{\na v_i}{v_i}dxdt \nonumber \\
  &\phantom{xx}{} +\sum_{i,j=1}^n\int_0^s\int_\Omega
	\chi_{\{\sum_{\ell=1}^n(u_\ell+\eps)> L\}}\varphi^L_K(u+\eps I)A_{ij}(v)
	\na v_j\cdot\frac{\na u_i}{v_i}dxdt \nonumber \\
  &\phantom{xx}{} -\sum_{i,j=1}^n\int_0^s\int_\Omega
	\chi_{\{\sum_{\ell=1}^n(u_\ell+\eps)> L\}}\varphi^L_K(u+\eps I)A_{ij}(v)
	\na v_j\cdot\frac{\na v_i}{v_i}\frac{u_i+\eps}{v_i}dxdt \nonumber \\
  &=: O_1 + \cdots + O_5. \label{2.defO}
\end{align}
The estimation of the expressions $O_i$ is rather technical. We start with $O_1$.

\medskip\noindent{\bf Estimation of $O_1$.}
We add and subtract $A_{ij}(u+\eps I)$ in $O_1$, which gives $O_1=O_{11}+O_{12}$,
where
\begin{align*}
  O_{11} &= -\sum_{i,j=1}^n\int_0^s\int_\Omega
	\chi_{\{\sum_{\ell=1}^n(u_\ell+\eps)\leq L\}}
	\bigg\{A_{ij}(u+\eps I)\na u_j\cdot\bigg(\frac{\na u_i}{u_i+\eps}
	- \frac{\na v_i}{v_i}\bigg) \\
  &\phantom{xxxx}{} +A_{ij}(v)\na v_j\cdot\bigg(\frac{\na v_i}{v_i}\frac{u_i+\eps}{v_i}
	- \frac{\na u_i}{v_i}\bigg)\bigg\}dxdt, \\
  O_{12} &= \sum_{i,j=1}^n\int_0^s\int_\Omega
	\chi_{\{\sum_{\ell=1}^n(u_\ell+\eps)\leq L\}} \big(A_{ij}(u+\eps I)-A_{ij}(u)\big)
	\na u_j\cdot\bigg(\frac{\na u_i}{u_i+\eps}-\frac{\na v_i}{v_i}\bigg)dxdt.
\end{align*}
Furthermore, we add and subtract the term $\na v_j/v_j$ in $O_{11}$.
We find after a short computation that
\begin{align*}
  O_{11} &= -\sum_{i,j=1}^n\int_0^s\int_\Omega
	\chi_{\{\sum_{\ell=1}^n(u_\ell+\eps)\leq L\}} A_{ij}(u+\eps I)(u_j+\eps) \\
	&\phantom{xxxx}{} \times\bigg(\frac{\na u_i}{u_i+\eps}-\frac{\na v_i}{v_i}\bigg)
	\cdot\bigg(\frac{\na u_j}{u_j+\eps}-\frac{\na v_j}{v_j}\bigg)dxdt \\
  &\phantom{xx}{} -\sum_{i,j=1}^n\int_0^s\int_\Omega
	\chi_{\{\sum_{\ell=1}^n(u_\ell+\eps)\leq L\}} A_{ij}(u+\eps I)(u_j+\eps)
	\frac{\na v_j}{v_j}\cdot\bigg(\frac{\na u_i}{u_i+\eps}-\frac{\na v_i}{v_i}\bigg)
	dxdt \\
  &\phantom{xx}{} -\sum_{i,j=1}^n\int_0^s\int_\Omega
	\chi_{\{\sum_{\ell=1}^n(u_\ell+\eps)\leq L\}} A_{ij}(v)\na v_j
	\cdot\bigg(\frac{\na v_i}{v_i}\frac{u_i+\eps}{v_i}-\frac{\na u_i}{v_i}\bigg)dxdt \\
  &= -\sum_{i,j=1}^n\int_0^s\int_\Omega
	\chi_{\{\sum_{\ell=1}^n(u_\ell+\eps)\leq L\}} A_{ij}(u+\eps I)(u_j+\eps) \\
	&\phantom{xxxx}{} \times\bigg(\frac{\na u_i}{u_i+\eps}-\frac{\na v_i}{v_i}\bigg)
	\cdot\bigg(\frac{\na u_j}{u_j+\eps}-\frac{\na v_j}{v_j}\bigg)dxdt \\
  &\phantom{xx}{} -\sum_{i,j=1}^n\int_0^s\int_\Omega
	\chi_{\{\sum_{\ell=1}^n(u_\ell+\eps)\leq L\}}
	\bigg(A_{ij}(u+\eps I)\frac{u_j+\eps}{v_j}-A_{ij}(v)\frac{u_i+\eps}{v_i}\bigg) \\
	&\phantom{xxxx}{}\times \na v_j\cdot\bigg(
	\frac{\na u_i}{u_i+\eps}-\frac{\na v_i}{v_i}\bigg)dxdt \\
  &=: O_{111}+O_{112}.
\end{align*}
It follows from the positive definiteness of $A(u)h''(u)^{-1}$ (Lemma \ref{lem.psd}) that
\begin{align}
  O_{111} &\le -\alpha_0\sum_{i=1}^n \int_0^s\int_\Omega
	\chi_{\{\sum_{\ell=1}^n(u_\ell+\eps)\leq L\}} (u_i+\eps)
	\bigg|\frac{\na u_i}{u_i+\eps}-\frac{\na v_i}{v_i}\bigg|^2dxdt \nonumber \\
  &\phantom{xx}{} -2\eta_0\sum_{i=1}^n \int_0^s\int_\Omega
	\chi_{\{\sum_{\ell=1}^n(u_\ell+\eps)\leq L\}} (u_i+\eps)^2
	\bigg|\frac{\na u_i}{u_i+\eps}-\frac{\na v_i}{v_i}\bigg|^2dxdt. \label{2.O111}
\end{align}
For the estimate of $O_{112}$, we use definition \eqref{1.A}
of the coefficients $A_{ij}$.
Some terms cancel in $O_{112}$ and we end up with
\begin{align*}
  O_{112} &= -\sum_{i=1}^n\int_0^s\int_\Omega
	\chi_{\{\sum_{\ell=1}^n(u_\ell+\eps)\leq L\}}
  \sum_{k=1}^n a_{ik}(u_k-v_k+\eps)\frac{u_i+\eps}{v_i}\na v_i\cdot
	\bigg(\frac{\na u_i}{u_i+\eps} - \frac{\na v_i}{v_i}\bigg)dxdt \\
  &\phantom{xx}{} -\sum_{i,j=1}^n\int_0^s\int_\Omega
	\chi_{\{\sum_{\ell=1}^n(u_\ell+\eps)\leq L\}} a_{ij}
	\bigg((u_i+\eps) \frac{u_j+\eps}{v_j}-v_i\frac{u_i+\eps}{v_i}\bigg)\\
	&\phantom{xxxx}{}\times \na v_j
	\cdot\bigg(\frac{\na u_i}{u_i+\eps}-\frac{\na v_i}{v_i}\bigg)dxdt \\
  &= -\sum_{i,j=1}^n\int_0^s\int_\Omega \chi_{\{\sum_{\ell=1}^n(u_\ell+\eps)\leq L\}}
	a_{ij}(u_j-v_j+\eps)(u_i+\eps) \\
	&\phantom{xxxx}{}\times\bigg(\frac{\na v_i}{v_i}+\frac{\na v_j}{v_j}\bigg)
	\cdot\bigg(\frac{\na u_i}{u_i+\eps}-\frac{\na v_i}{v_i}\bigg)dxdt.
\end{align*}
Using the regularity of $v$ and Young's inequality, we find that
\begin{align*}
  O_{112} &\le C(v)\sum_{i,j=1}^n\int_0^s\int_\Omega
	\chi_{\{\sum_{\ell=1}^n(u_\ell+\eps)\leq L\}} |u_j-v_j+\eps|(u_i+\eps)
	\bigg|\frac{\na u_i}{u_i+\eps}-\frac{\na v_i}{v_i}\bigg|dxdt \\
  &\le \eta_0\sum_{i=1}^n \int_0^s\int_\Omega
	\chi_{\{\sum_{\ell=1}^n(u_\ell+\eps)\leq L\}} (u_i+\eps)^2
	\Bigg|\frac{\na u_i}{u_i+\eps}-\frac{\na v_i}{v_i}\bigg|^2dxdt \\
  &\phantom{xx}{} +C(v)\sum_{i=1}^n\int_0^s\int_\Omega
	\chi_{\{\sum_{\ell=1}^n u_\ell\leq L\}} |u_i-v_i|^2 dxdt + C(v,T,\Omega)\eps^2,
\end{align*}
where in the last step we have used $\{\sum_{\ell=1}^n(u_\ell+\eps)\leq L\}
\subset\{\sum_{\ell=1}^n u_\ell\leq L\}$. The first term on the right-hand side
can be absorbed by the second term on the right-hand side of estimate \eqref{2.O111}
for $O_{111}$, and combining the estimates, we obtain
$$
  O_{11} \le C(v)\sum_{i=1}^n\int_0^s\int_\Omega \chi_{\{\sum_{\ell=1}^nu_\ell\leq L\}}
	|u_i-v_i|^2dxdt + C(v,T,\Omega)\eps^2.
$$

We turn to the estimate of $O_{12}$. Again using definition \eqref{1.A} of $A_{ij}$,
it follows that
\begin{align*}
  O_{12} &= \eps\sum_{i,j=1}^n\int_0^s\int_\Omega
	\chi_{\{\sum_{\ell=1}^n(u_\ell+\eps)\leq L\}}
	\bigg(\delta_{ij}\sum_{k=1}^na_{ik}+a_{ij}\bigg)\na u_j
	\cdot\bigg(\frac{\na u_i}{u_i+\eps}-\frac{\na v_i}{v_i}\bigg)dxdt \\
  &\le C\eps\sum_{i,j=1}^n\int_0^s\int_\Omega
	\chi_{\{\sum_{\ell=1}^n(u_\ell+\eps)\leq L\}} |\na u_j|\frac{|\na u_i|}{u_i+\eps}dxdt \\
  &\phantom{xx}{}+ C(v)\eps\sum_{i=1}^n\int_0^s\int_\Omega
	\chi_{\{\sum_{\ell=1}^n(u_\ell+\eps)\leq L\}} |\na u_i|dxdt.
\end{align*}
We integrand of the first term on the right-hand side can be reformulated
according to
\begin{equation}\label{2.sqrt}
  \sqrt{\eps}|\na u_j|\frac{|\na u_i|}{u_i+\eps}
	= 2\sqrt{\frac{\eps}{u_i+\eps}}\sqrt{\frac{u_i}{u_i+\eps}}|\na {u_j}||\na \sqrt{u_i}|
	\leq 2|\na {u_j}||\na \sqrt{u_i}|,
\end{equation}
and using Young's inequality, we deduce that
\begin{align*}
  O_{12} &\le C(v)\sqrt{\eps}\sum_{i=1}^n\int_0^s\int_\Omega |\na u_i|^2dxdt
  + C(v)\sqrt{\eps}\sum_{i=1}^n\int_0^s\int_\Omega\big(1+|\na \sqrt{u_i}|^2\big)dxdt \\
  &\le C(v,T,\Omega)\sqrt{\eps}.
\end{align*}
Note that we need here the condition $a_{i0}>0$ which yields an $L^2$ bound for
$\na\sqrt{u_i}$ (see Remark \ref{rem}).

We conclude the estimate of $O_1$ by adding the bounds for $O_{11}$ and $O_{12}$:
$$
  O_1 \le C(v)\sum_{i=1}^n\int_0^s\int_\Omega
	\chi_{\{\sum_{\ell=1}^nu_\ell\leq L\}} |u_i-v_i|^2dxdt + C(v,T,\Omega)\sqrt{\eps}.
$$

\medskip\noindent{\bf Estimation of $O_2$.} We add and subtract $A_{ij}(u+\eps I)$
in definition \eqref{2.defO} of $O_2$ and use the definition of $A_{ij}$ to find that
\begin{align*}
  O_2 &= -\sum_{i,j=1}^n\int_0^s\int_\Omega\chi_{\{\sum_{\ell=1}^n(u_\ell+\eps)> L\}}
	\varphi^M_K(u+\eps I) A_{ij}(u+\eps I)\na u_j\cdot\frac{\na u_i}{u_i+\eps}dxdt \\
  &\phantom{xx}{}+\sum_{i,j=1}^n\int_0^s\int_\Omega
	\chi_{\{\sum_{\ell=1}^n(u_\ell+\eps)> L\}}\varphi^M_K(u+\eps I)
	\big(A_{ij}(u+\eps I)-A_{ij}(u)\big)\na u_j\cdot\frac{\na u_i}{u_i+\eps}dxdt \\
  &= -\sum_{i,j=1}^n\int_0^s\int_\Omega\chi_{\{\sum_{\ell=1}^n(u_\ell+\eps)> L\}}
	\varphi^M_K(u+\eps I) A_{ij}(u+\eps I)(u_j+\eps)\frac{\na u_j}{u_j+\eps}
	\cdot\frac{\na u_i}{u_i+\eps}dxdt \\
  &\phantom{xx}{} +\eps\sum_{i,j=1}^n\int_0^s\int_\Omega
	\chi_{\{\sum_{\ell=1}^n(u_\ell+\eps)> L\}}\varphi^M_K(u+\eps I)
	\bigg(\delta_{ij}\sum_{k=1}^na_{ik}+a_{ij}\bigg)
	\na u_j\cdot\frac{\na u_i}{u_i+\eps}dxdt \\
  &=: O_{21}+O_{22}.
\end{align*}
We employ the positive definiteness of $A(u+\eps I)h''(u+\eps I)^{-1}$
to estimate $O_{21}$:
\begin{align*}
  O_{21} &\le -2\eta_0\sum_{i=1}^n\int_0^s\int_\Omega
	\chi_{\{\sum_{\ell=1}^n(u_\ell+\eps)> L\}}\varphi^M_K(u+\eps I)
	(u_i+\eps)^2\bigg|\frac{\na u_i}{u_i+\eps}\bigg|^2dxdt \\
	&\le -2\eta_0\sum_{i=1}^n\int_0^s\int_\Omega
	\chi_{\{\sum_{\ell=1}^n(u_\ell+\eps)> L\}}\varphi^M_K(u+\eps I)|\na u_i|^2dxdt.
\end{align*}
For the estimate of $O_{22}$, we take into account \eqref{2.sqrt} and use
Young's inequality similarly as in the estimate of $O_{12}$:
\begin{align*}
  O_{22} &\le C\eps\sum_{i,j=1}^n\int_0^s\int_\Omega|\na u_j|
	\bigg|\frac{\na u_i}{u_i+\eps}\bigg|dxdt \\
  &\le C\sqrt{\eps}\sum_{i=1}^n\int_0^s\int_\Omega\big(|\na u_i|^2
	+ |\na \sqrt{u_i}|^2\big)dxdt
	\le C\sqrt{\eps}.
\end{align*}
Adding the inequalities for $O_{21}$ and $O_{22}$ then gives
$$
  O_2 \le -2\eta_0\sum_{i=1}^n\int_0^s\int_\Omega
	\chi_{\{\sum_{\ell=1}^n(u_\ell+\eps)> L\}}\varphi^M_K(u+\eps I)|\na u_i|^2dxdt
	+ C\sqrt{\eps}.
$$

\medskip\noindent{\bf Estimation of $O_3$, $O_4$, and $O_5$.}
We conclude from (L3) and Young's inequality that
\begin{align*}
  O_3 &= \sum_{i,j=1}^n\int_0^s\int_\Omega
	\chi_{\{(L+e)^{K+1}>\sum_{\ell=1}^n(u_\ell+\eps)> L\}}
	\varphi^L_K(u+\eps I)A_{ij}(u)\na u_j\cdot\frac{\na v_i}{v_i}dxdt \\
  &\le C(L,K,v)\sum_{i=1}^n\int_0^s\int_\Omega
	\chi_{\{(L+e)^{K+1}>\sum_{\ell=1}^n(u_\ell+\eps)> L\}}|\na {u_i}|dxdt \\
  &\le \frac{\eta_0}{2}\sum_{i=1}^n\int_0^s\int_\Omega
	\chi_{\{\sum_{\ell=1}^n(u_\ell+\eps)> L\}}|\na {u_i}|^2dxdt \\
	&\phantom{xx}{}
	+ C(L,K,v)\sum_{i=1}^n\int_0^s\int_\Omega\chi_{\{\sum_{\ell=1}^n(u_\ell+\eps)> L\}}dxdt.
\end{align*}
In a similar way, we can estimate
\begin{align*}
  O_4+O_5 &= \sum_{i,j=1}^n\int_0^s\int_\Omega\chi_{\{\sum_{\ell=1}^n(u_\ell+\eps)> L\}}
	\varphi^L_K(u+\eps I)A_{ij}(v)\na v_j\cdot\frac{\na u_i}{v_i}dxdt \\
  &\phantom{xx}{} -\sum_{i,j=1}^n\int_0^s\int_\Omega
	\chi_{\{\sum_{\ell=1}^n(u_\ell+\eps)> L\}}\varphi^L_K(u+\eps I)A_{ij}(v)
	\na v_j\cdot\frac{\na v_i}{v_i}\frac{u_i+\eps}{v_i}dxdt \\
  &\le C(v)\sum_{i=1}^n\int_0^s\int_\Omega\chi_{\{\sum_{\ell=1}^n(u_\ell+\eps)> L\}}
	|\na{u_i}|dxdt \\
  &\phantom{xx}{} +C(v)\sum_{i=1}^n\int_0^s\int_\Omega
	\chi_{\{\sum_{\ell=1}^n(u_\ell+\eps)> L\}}(u_i+1)dxdt \\
  &\le \frac{\eta_0}{2}\sum_{i=1}^n\int_0^s\int_\Omega
	\chi_{\{\sum_{\ell=1}^n(u_\ell+\eps)> L\}}|\na {u_i}|^2dxdt \\
  &\phantom{xx}{} +C(v)\int_0^s\int_\Omega\chi_{\{\sum_{\ell=1}^n(u_\ell+\eps)> L\}}
	\bigg(1+\sum_{i=1}^nu_i\bigg)dxdt.
\end{align*}

Adding all the estimates for $O_1,\ldots,O_5$, we conclude from
\eqref{2.defO} that
\begin{align}
  G_{11}&+I_4+I_{61}+I_{10} \nonumber \\
  &\le -2\eta_0\sum_{i=1}^n\int_0^s\int_\Omega\chi_{\{\sum_{\ell=1}^n(u_\ell+\eps)> L\}}
	\varphi^M_K(u+\eps I)|\na u_i|^2dxdt \nonumber \\
  &\phantom{xx}{} +\eta_0\sum_{i=1}^n\int_0^s\int_\Omega
	\chi_{\{\sum_{\ell=1}^n(u_\ell+\eps)> L\}}|\na {u_i}|^2dxdt \nonumber \\
  &\phantom{xx}{} +C(v)\sum_{i=1}^n\int_0^s\int_\Omega
	\chi_{\{\sum_{\ell=1}^nu_\ell\leq L\}} |u_i-v_i|^2dxdt \nonumber \\
  &\phantom{xx}{} +C(L,K,v)\int_0^s\int_\Omega\chi_{\{\sum_{\ell=1}^n(u_\ell+\eps)> L\}}
	\bigg(1+\sum_{i=1}^nu_i\bigg)dxdt \nonumber \\
  &\phantom{xx}{} +C(v,T,\Omega)\sqrt{\eps}. \label{2.conc2}
\end{align}

%%%%%%%%%%%%%%%%%%%%%

\subsection{The limit $\eps\to 0$.}

Inserting the estimates of the previous subsections and observing that the
term $I_{12}$ can be estimated as
$$
  I_{12} = \eps\sum_{i=1}^n\int_0^s \int_\Omega\varphi_K^L(u+\eps I)
	\frac{b_i\cdot\na v_i}{v_i}dxdt \le C(b,v,T,\Omega)\eps,
$$
we infer from \eqref{2.Heps} that
\begin{align}
  H_{K,\epsilon}^{M, L}&(u|v)\Big|_0^s
	+ \sum_{i=1}^n e^{-\lambda_i}\int_\Omega\varphi_K^M(u+\eps I)dx\Big|_0^s \nonumber \\
	&\le -2\eta_0\sum_{i=1}^n\int_0^s\int_\Omega\chi_{\{\sum_{\ell=1}^n(u_\ell+\eps)> L\}}
	\varphi^M_K(u+\eps I)|\na u_i|^2dxdt \nonumber \\
  &\phantom{xx}{} +\eta_0\sum_{i=1}^n\int_0^s\int_\Omega
	\chi_{\{\sum_{\ell=1}^n(u_\ell+\eps)> L\}}|\na {u_i}|^2dxdt \nonumber \\
	&\phantom{xx}{} +C(L,K,f,v)\int_0^s\int_\Omega
	\chi_{\{\sum_{\ell=1}^n(u_\ell+\eps)> L\}}\bigg(1+\sum_{i=1}^nu_i\bigg)dxdt \nonumber \\
  &\phantom{xx}{} +C(L,f,v)\sum_{i=1}^n\int_0^s\int_\Omega
	\chi_{\{\sum_{\ell=1}^nu_\ell\leq L\}} |u_i-v_i|^2dxdt \nonumber \\
  &\phantom{xx}{} +C(M,K)\eps(1-\log\eps) + C(L,b,f,v,T,\Omega)\sqrt{\eps} \nonumber \\
  &\phantom{xx}{} +G_{12}+G_2+\cdots+G_5+I_1+I_2+I_{3}+I_5+I_{62}+I_7.
	\label{2.conc3}
\end{align}
We pass to the limit $\eps\to 0$ in this inequality.
First, we consider the left-hand side. We split the integral
of $H_{K,\eps}^{M,l}$ into two parts and analyze each part separately.
By the mean-value theorem, we have for some $\theta_i\in[0,1]$,
\begin{align*}
  \sum_{i=1}^n & (u_i+\eps)\big(\log (u_i+\eps)+\lambda_i-1\big)
	+ \sum_{i=1}^ne^{-\lambda_i} = h(u+\eps I) \\
  &= h(u)+\sum_{i=1}^n h_i'(u_i+\theta_i\eps)\eps
	= h(u)+\eps\sum_{i=1}^n \big(\log(u_i+\theta_i\eps)+\lambda_i\big) \\
  &\le h(u)+\sum_{i=1}^n (u_i+1+\lambda_i)\in L^1(\Omega).
\end{align*}
Thus, together with the bound (L1), we can apply the dominated convergence theorem
to infer that, as $\eps\to 0$,
\begin{align*}
  \int_\Omega &\varphi^M_K(u+\eps I)\sum_{i=1}^n\big[(u_i+\eps)\big(
	\log (u_i+\eps)+\lambda_i-1\big) + e^{-\lambda_i}\big]dx\Big|_0^s \\
  &\to \int_\Omega \varphi^M_K(u)\sum_{i=1}^n \big[u_i \big(\log  u_i +\lambda_i-1\big)
	+ e^{-\lambda_i}\big]dx\Big|_0^s.
\end{align*}
Similarly, it follows from the uniform bound
$$
  \bigg|\sum_{i=1}^n\Big(\varphi^L_K(u+\eps I)(u_i+\eps)(\log v_i+\lambda_i)
	-v_i)\bigg|
	\leq C(v)\bigg(\sum_{i=1}^nu_i+1\bigg)\in L^1(\Omega)
$$
that in the limit $\eps\to 0$,
\begin{align*}
  \int_\Omega \sum_{i=1}^n&\Big(\varphi^L_K(u+\eps I)(u_i+\eps)(\log v_i+\lambda_i)
	- v_i\Big)dx\Big|_0^s \\
	&\to \int_\Omega \sum_{i=1}^n\big(\varphi^L_K(u)u_i(\log v_i+\lambda_i)
	- v_i\big)dx\Big|_0^s.
\end{align*}
Consequently, the left-hand side of \eqref{2.conc3} converges as $\eps\to 0$:
\begin{align}
  H_{K,\eps}^{M,L}&(u|v)
	+ \sum_{i=1}^n e^{-\lambda_i}\int_\Omega\varphi_K^M(u+\eps I)dx\Big|_0^s \nonumber \\
	&\to H_K^{M,L}(u|v) + \sum_{i=1}^n e^{-\lambda_i}\int_\Omega\varphi_K^M(u)dx
	\Big|_0^s, \label{2.HKLM}
\end{align}
where
$$
  H_K^{M,L}(u|v)
	:= \int_\Omega\sum_{i=1}^n\Big(\varphi_K^M(u)u_i(\log u_i+\lambda_i-1)
	- \varphi_K^L(u)u_i(\log v_i+\lambda_i) + v_i\Big)dx.
$$

Next, we turn to the limit $\eps\to 0$ on the right-hand side of
\eqref{2.conc3}. We observe that for a.e.\ $(x,t)\in\Omega\times(0,s)$,
$$
  \lim_{\eps\to 0}\chi_{\{\sum_{\ell=1}^n(u_\ell+\eps)> L\}}(x,t)
	= \chi_{\{\sum_{\ell=1}^nu_\ell\geq L\}}(x,t).
$$
Then, by the dominated convergence theorem,
we can pass to the limit $\eps\to 0$ in the first three terms on the right-hand side
of \eqref{2.conc3}, leading to
\begin{align*}
  & \sum_{i=1}^n\int_0^s\int_\Omega\chi_{\{\sum_{\ell=1}^n(u_\ell+\eps)> L\}}
	\varphi^M_K(u+\eps I)|\na {u_i}|^2dxdt \\
  &\phantom{xxxxxx}{}\to
	\sum_{i=1}^n\int_0^s\int_\Omega\chi_{\{\sum_{\ell=1}^n u_\ell\geq L\}}
	\varphi^M_K(u)|\na {u_i}|^2dxdt, \\
	& \sum_{i=1}^n\int_0^s\int_\Omega\chi_{\{\sum_{\ell=1}^n(u_\ell+\eps)> L\}}
	|\na {u_i}|^2dxdt
  \to \sum_{i=1}^n\int_0^s\int_\Omega\chi_{\{\sum_{\ell=1}^n u_\ell\geq L\}}
	|\na {u_i}|^2dxdt, \\
  & \int_0^s\int_\Omega\chi_{\{\sum_{\ell=1}^n(u_\ell+\eps)> L\}}
	\bigg(1+\sum_{i=1}^nu_i\bigg)dxdt
	\to \int_0^s\int_\Omega\chi_{\{\sum_{\ell=1}^nu_\ell\geq L\}}
	\bigg(1+\sum_{i=1}^nu_i\bigg)dxdt.
\end{align*}

We perform the limit $\eps\to 0$ in the remaining terms. By dominated cnvergence,
we find that
\begin{align*}
  G_{12} &= \sum_{i=1}^n\int_0^s\int_\Omega\varphi^M_K(u+\eps I)\frac{u_i}{u_i+\eps}
	b_i\cdot\na u_idxdt
  \to \sum_{i=1}^n\int_0^s\int_\Omega\chi_{\{u_i>0\}}\varphi^M_K(u)b_i\cdot\na u_idxdt,\\
  I_{62} &= -\sum_{i=1}^n\int_0^s\int_\Omega\varphi^L_K(u+\eps I)b_i\cdot\na u_idxdt \\
  &\to -\sum_{i=1}^n\int_0^s\int_\Omega\varphi^L_K(u)b_i\cdot\na u_idxdt
  = -\sum_{i=1}^n\int_0^s\int_\Omega\chi_{\{u_i>0\}}\varphi^L_K(u)b_i\cdot\na u_idxdt.
\end{align*}

Let us consider the integrand of $G_3$. Using the definition for $A_{ij}$,
we obtain the pointwise convergence as $\eps\to 0$:
\begin{align*}
  \pa_k\varphi^M_K&(u+\eps I)\big(\log (u_i+\eps)+\lambda_i\big)
  \bigg(\sum_{j=1}^nA_{ij}(u)\na u_j-u_ib_i\bigg)\cdot\na u_k \\
  &= 2\pa_k\varphi^M_K(u+\eps I)\sqrt{u_i}\big(\log (u_i+\eps)+\lambda_i\big)
	\bigg(a_{i0}+\sum_{\ell=1}^na_{il}u_\ell\bigg)\na \sqrt{u_i}\cdot\na u_k \\
  &\phantom{xx}{}+ \pa_k\varphi^M_K(u+\eps I)u_i\big(\log (u_i+\eps)+\lambda_i\big)
	\bigg(\sum_{j=1}^na_{ij}\na u_j-b_i\bigg)\cdot\na u_k \\
  &\to 2\pa_k\varphi^M_K(u)\sqrt{u_i}(\log u_i+\lambda_i)
	\bigg(a_{i0}+\sum_{\ell=1}^na_{i\ell}u_\ell\bigg)\na\sqrt{u_i}\cdot\na u_k \\
  &\phantom{xx}{}+ \pa_k\varphi^M_K(u)u_i(\log u_i+\lambda_i)
	\bigg(\sum_{j=1}^na_{ij}\na u_j-b_i\bigg)\cdot\na u_k.
\end{align*}
Taking the modulus and summing over $i=1,\ldots,n$, the left-hand side is bounded
from above by
$$
  C(M,K)\sum_{i,k=1}^n\big(|\na \sqrt{u_i}|+|\na u_i|+1\big)|\na u_k|
	\le C(M,K)\sum_{i=1}^n\big(|\na u_i|^2+|\na \sqrt{u_i}|^2+1\big),
$$
which is an $L^1(\Omega\times(0,T))$ function. Therefore, we can use the
dominated convergence theorem again to infer that
\begin{align*}
  G_3 &\to -2\sum_{i,k=1}^n\int_0^s\int_\Omega\pa_k\varphi^M_K(u)\sqrt{u_i}
	(\log u_i+\lambda_i)\bigg(a_{i0}+\sum_{\ell=1}^na_{i\ell}u_\ell\bigg)
	\na\sqrt{u_i}\cdot\na u_k dxdt \\
  &\phantom{xx}{} -\sum_{i,k=1}^n\int_0^s\int_\Omega\pa_k\varphi^M_K(u)u_i
	(\log u_i+\lambda_i)\bigg(\sum_{j=1}^na_{ij}\na u_j-b_i\bigg)\cdot\na u_kdxdt.
\end{align*}
Similarly, the limit $\eps\to 0$ in $G_4$ gives
$$
  G_4 \to -2\sum_{i,k=1}^n\int_0^s\int_\Omega\pa_i\varphi^M_K(u)\sqrt{u_k}
	(\log u_k+\lambda_k)\bigg(\sum_{j=1}^nA_{ij}(u)\na u_j-u_ib_i\bigg)
	\cdot\na\sqrt{u_k}dxdt.
$$
The limit $\eps\to 0$ in the remaining terms $G_2$, $G_5$, $I_1$, $I_2$, $I_3$,
$I_5$, $I_7$ follows
directly from property (L3) and the dominated convergence theorem.
We conclude from \eqref{2.conc3} and \eqref{2.HKLM} that
\begin{equation}\label{2.conc4}
  H_{K}^{M,L}(u|v)\Big|_0^s + \sum_{i=1}^ne^{-\lambda_i}\int_\Omega
	\varphi^M_K(u)dx\Big|_0^s
	\le P_1+\cdots+P_{15},
\end{equation}
where
\begin{align*}
  P_1 &= -2\eta_0\sum_{i=1}^n\int_0^s\int_\Omega\chi_{\{\sum_{\ell=1}^n u_\ell\geq L\}}
	\varphi^M_K(u)|\na u_i|^2dxdt, \\
	P_2 &= \eta_0\sum_{i=1}^n\int_0^s\int_\Omega\chi_{\{\sum_{\ell=1}^n u_\ell\geq L\}}
	|\na u_i|^2dxdt, \\
  P_3 &= C(L,f,v)\int_0^s\int_\Omega\chi_{\{\sum_{\ell=1}^nu_\ell\leq L\}}
	\sum_{i=1}^n|u_i-v_i|^2dxdt, \\
  P_4 &= C(L,K,f,v)\int_0^s\int_\Omega\chi_{\{\sum_{\ell=1}^nu_\ell\geq L\}}
	\bigg(1+\sum_{i=1}^nu_i\bigg)dxdt, \\
  P_5 &= \sum_{i=1}^n\int_0^s\int_\Omega\chi_{\{u_i>0\}}
	\big(\varphi^M_K(u)-\varphi^L_K(u)\big)b_i\cdot\na u_idxdt, \\
  P_6 &= -\sum_{i,k=1}^n\int_0^s\int_\Omega\pa_i\pa_k\varphi^M_K(u)
	\sum_{\ell=1}^n\big(u_\ell(\log u_\ell+\lambda_\ell-1)+e^{-\lambda_\ell}\big) \\
  &\phantom{xx}{}\times\bigg(\sum_{j=1}^nA_{ij}(u)\na u_j-u_ib_i\bigg)\cdot\na u_kdxdt, \\
  P_7 &= -2\sum_{i,k=1}^n\int_0^s\int_\Omega\pa_k\varphi^M_K(u)\sqrt{u_i}
	(\log u_i+\lambda_i)\bigg(a_{i0}+\sum_{\ell=1}^na_{i\ell}u_\ell\bigg)
	\na \sqrt{u_i}\cdot\na u_kdxdt, \\
  P_8 &= -\sum_{i,k=1}^n\int_0^s\int_\Omega\pa_k\varphi^M_K(u)u_i
	(\log u_i+\lambda_i)\bigg(\sum_{j=1}^na_{ij}\na u_j-b_i\bigg)\cdot\na u_kdxdt, \\
  P_{9} &= -2\sum_{i,k=1}^n\int_0^s\int_\Omega\pa_i\varphi^M_K(u)\sqrt{u_k}
	(\log u_k+\lambda_k)\bigg(\sum_{j=1}^nA_{ij}(u)\na u_j-u_ib_i\bigg)
	\cdot\na \sqrt{u_k}dxdt, \\
  P_{10} &= \sum_{i=1}^n\int_0^s\int_\Omega\pa_i\varphi^M_K(u)\sum_{\ell=1}^n
	\big(u_\ell(\log u_\ell+\lambda_\ell-1)+e^{-\lambda_\ell}\big)f_i(u)dxdt, \\
  P_{11} &= \sum_{i,j=1}^n\int_0^s\int_\Omega \pa_j\varphi^L_K(u)
	(\log v_i+\lambda_i)\bigg(\sum_{\ell=1}^nA_{j\ell}(u)\na u_\ell-u_jb_j\bigg)
	\cdot\na u_idxdt, \\
  P_{12} &= \sum_{i,j=1}^n\int_0^s\int_\Omega \pa_j\varphi^L_K(u)
	(\log v_i+\lambda_i)\bigg(\sum_{\ell=1}^nA_{i\ell}(u)\na u_\ell-u_ib_i\bigg)
	\cdot\na u_jdxdt, \\
  P_{13} &= \sum_{i,j,k=1}^n\int_0^s\int_\Omega u_i\pa_j\pa_k\varphi^L_K(u)
	(\log v_i+\lambda_i)\bigg(\sum_{\ell=1}^nA_{j\ell}(u)\na u_\ell-u_jb_j\bigg)
	\cdot\na u_kdxdt, \\
  P_{14} &= \sum_{i,j=1}^n\int_0^s\int_\Omega u_i\pa_j\varphi^L_K(u)
	\bigg(\sum_{\ell=1}^nA_{j\ell}(u)\na u_\ell-u_jb_j\bigg)\cdot\frac{\na v_i}{v_i}dxdt, \\
  P_{15} &= \sum_{i,j=1}^n\int_0^s\int_\Omega u_i\pa_j\varphi^L_K(u)
	\bigg(\sum_{\ell=1}^nA_{i\ell}(v)\na v_\ell-v_ib_i\bigg)\cdot\frac{\na u_j}{v_i}dxdt.
\end{align*}

%%%%%%%%%%%%%%%%%%%%

\subsection{The limit $M\to \infty$}

We perform the limit $M\to\infty$ in \eqref{2.conc4}. Observe that the terms
$P_2,\ldots,P_4$ and $P_{11},\ldots,P_{15}$ do not depend on $M$ such that
we need to pass to the limit only in the remaining terms. First, we consider
the left-hand side of \eqref{2.conc4}. Recall that
\begin{align*}
  H_\eps^{M,L}(u|v) &+ \sum_{i=1}^n e^{-\lambda_i}\int_\Omega\varphi_K^M(u)dx \\
	&= \int_\Omega\bigg(\varphi_K^M(u)h(u)
	-\sum_{i=1}^n\big(\varphi_K^L(u)u_i(\log v_i+\lambda_i)-v_i\big)\bigg)dx,
\end{align*}
where $h(u)$ is defined in \eqref{1.h}.
Since $|\varphi_K^M(u)h(u)|\le h(u)$ and $\varphi_K^M(u)\to 1$ pointwise a.e.\
as $M\to\infty$, we infer from the dominated convergence theorem that
\begin{align*}
  \int_\Omega \varphi^M_K(u)h(u)dx\Big|_0^s
  \to \int_\Omega h(u)dx\Big|_0^s
	&= \int_\Omega \sum_{\ell=1}^n u_\ell(\log u_\ell+\lambda_\ell-1)dx\Big|_0^s
	+ \sum_{i=1}^ne^{-\lambda_i}\int_\Omega dx\Big|_0^s \\
  &= \int_\Omega \sum_{\ell=1}^n u_\ell(\log u_\ell+\lambda_\ell-1)dx\Big|_0^s.
\end{align*}
This shows that in the limit $M\to\infty$,
\begin{align*}
  & H_{K}^{M,L}(u|v)\Big|_0^s + \sum_{i=1}^n e^{-\lambda_i}\int_\Omega
	\varphi^M_K(u)dx\Big|_0^s \to H_K^L(u|v)\Big|_0^s \\
	&:=	\int_\Omega \bigg(\sum_{\ell=1}^n u_\ell(\log u_\ell+\lambda_\ell-1)
	- \sum_{i=1}^n\big(\varphi^L_K(u)u_i(\log {v_i}+\lambda_i)-v_i\big)
	\bigg)dx\Big|_0^s.
\end{align*}

We turn to the terms on the right-hand side of \eqref{2.conc4}. Clearly,
as $M\to\infty$,
$$
  P_1 \to -2\eta_0\sum_{i=1}^n\int_0^s\int_\Omega\chi_{\{\sum_{\ell=1}^n u_\ell\geq L\}}
	|\na u_i|^2dxdt.
$$
Recall that $P_3$ and $P_4$ do not depend on $M$. Furthermore,
$$
  P_5 \to \sum_{i=1}^n\int_0^s\int_\Omega\chi_{\{u_i>0\}}\big(1-\varphi^L_K(u)\big)
	b_i\cdot\na u_idxdt.
$$
We use (L5) to estimate the following part of the integrand of $P_6$:
\begin{align*}
   \bigg|\pa_i\pa_k\varphi^M_K(u) & \big(u_\ell(\log u_\ell+\lambda_\ell-1)
	+ e^{-\lambda_\ell}\big)(1+u_j)\bigg| \\
  &\le C(K)\frac{[u_\ell(\log u_\ell+\lambda_\ell-1) + e^{-\lambda_\ell}](1+u_j)}{
	(\sum_{i=1}^n u_i + e)^2\log(\sum_{i=1}^n u_i + e)}\le C(K).
\end{align*}
Thus, the integrand of $P_6$ is bounded from above by
$$
  C(K)\sum_{j=1}^n(|\na u_j|+1)|\na u_k|
	\le C(K)\sum_{j=1}^n\big(|\na u_j|^2+1\big)\in L^1(\Omega\times(0,T)).
$$
We deduce from $\pa_i\pa_k\varphi_K^M(u)\to 0$ as $M\to\infty$ that
$$
  P_6\to 0\quad\mbox{as }M\to\infty.
$$

We rewrite the term $P_7$ as
\begin{align*}
  P_7 &= -2\sum_{i,k=1}^n\int_0^s\int_\Omega\chi_{\{u_i\leq 1\}}\pa_k\varphi^M_K(u)
	\sqrt{u_i}(\log u_i+\lambda_i)\bigg(a_{i0}+\sum_{\ell=1}^na_{il}u_\ell\bigg)
	\na \sqrt{u_i}\cdot\na {u_k}dxdt\\
  &\phantom{xx}{} -\sum_{i,k=1}^n\int_0^s\int_\Omega\chi_{\{u_i> 1\}}
	\pa_k\varphi^M_K(u)(\log u_i+\lambda_i)\bigg(a_{i0}+\sum_{\ell=1}^na_{il}u_\ell\bigg)
	\na {u_i}\cdot\na u_k dxdt.
\end{align*}
Since
\begin{align*}
  \bigg|\chi_{\{u_i\leq 1\}}\pa_k\varphi^M_K(u)\sqrt{u_i}(\log u_i+\lambda_i)
	(1+u_j)\bigg|
  &\le \frac{C(K)(1+u_j)}{(\sum_{i=1}^n u_i +e)\log(\sum_{i=1}^n u_i +e)}\le C(K), \\
  \bigg|\chi_{\{u_i>1\}}\pa_{k}\varphi^M_K(u)(\log u_i +\lambda_i)(1+u_j)\bigg|
  &\le \frac{C(K)\chi_{\{u_i> 1\}}(\log u_i +\lambda_i)(1+u_j)}{(\sum_{i=1}^n u_i+e)
	\log(\sum_{i=1}^n u_i+e)}\le C(K),
\end{align*}
the integrand of $P_7$ is bounded from above by
$$
  C(K)\sum_{i,k=1}^n\big(|\na u_i|+|\na\sqrt{u_i}|\big)|\na u_k|
	\le C(K)\sum_{i=1}^n\big(|\na u_i|^2+|\na \sqrt{u_i}|^2\big).
$$
and the right-hand side is a function in $L^1(\Omega\times(0,T))$.
We infer from $\lim_{M\to\infty}\partial_k\varphi^M_K(u)=0$ and Lebesgue's dominated
convergence theorem that $P_7\to 0$ as $M\to\infty$.
Similarly, we infer that
$$
  P_8\to 0, \quad P_9\to 0\quad\mbox{as }M\to\infty.
$$

It remains to estimate $P_{10}$ as $P_{11},\ldots,P_{15}$ do not depend on $M$.
For this, we make explicit the derivative $\pa_i\varphi_K^M(u)$:
\begin{align*}
  P_{10} &= \int_0^s\int_\Omega\chi_{\{\sum_{k=1}^nu_k\geq M\}}
  \varphi'\bigg(\frac{\log\log(\sum_{k=1}^nu_k+e)-\log\log(M+e)}{\log(K+1)}\bigg) \\
  &\phantom{xx}{}\times\frac{\sum_{\ell=1}^n[u_\ell(\log u_\ell+\lambda_\ell-1)
	+ e^{-\lambda_\ell}]}{\log(K+1)({\sum_{k=1}^nu_k}+e)\log({\sum_{k=1}^nu_k}+e)}
  \sum_{i=1}^nf_i(u)dxdt.
\end{align*}
According to condition (H2.iii), there exists $M_0\in\N$
such that for all $\sum_{i=1}^n u_i\ge M_0$,
it holds that $\sum_{i=1}^n f_i(u)\ge 0$ if $M\ge M_0$,
and hence from $\varphi'\leq 0$ that
$P_{10}\le 0$. 

\begin{remark}\label{rem.alt}\rm
If we assume that $|\sum_{i=1}^n f_i(w)|\le C(1+|w|^p)$ for all $w\in[0,\infty)^n$,
we can conclude that $P_{10}\to 0$ as $M\to\infty$. Indeed, 
it follows from the Gagliardo-Nirenberg 
inequality (as shown in \cite[page 732]{CDJ18}) that $u_i\in L^p(\Omega\times(0,T))$ 
with $p=2+2/d$. This implies that $\sum_{i=1}^n f_i(u)\in L^1(\Omega\times(0,T))$, 
and we deduce from
$\lim_{M\to\infty}\chi_{\{\sum_{k=1}^nu_k\geq M\}(x,t)}=0$ and Lebesgue's dominated
convergence theorem that $P_{10}\to 0$ as $M\to\infty$.
\qed
\end{remark}

In conclusion, we obtain from \eqref{2.conc4} in the limit $M\to\infty$,
\begin{equation}\label{2.conc5}
  H_K^L(u|v)\Big|_0^s \le Q_1 + \cdots + Q_4 + P_{11} + \cdots + P_{15},
\end{equation}
where
\begin{align*}
  Q_1 &= -\eta_0\sum_{i=1}^n\int_0^s\int_\Omega\chi_{\{\sum_{\ell=1}^n u_\ell\geq L\}}
	|\na u_i|^2dxdt, \\
  Q_2 &= C(L,f,v)\int_0^s\int_\Omega\chi_{\{\sum_{\ell=1}^nu_\ell\leq L\}}
	\sum_{i=1}^n|u_i-v_i|^2dxdt, \\
  Q_3 &= C(L,K,f,v)\int_0^s\int_\Omega\chi_{\{\sum_{\ell=1}^nu_\ell\geq L\}}
	\bigg(1+\sum_{i=1}^nu_i\bigg)dxdt, \\
  Q_4 &= \sum_{i=1}^n\int_0^s\int_\Omega\chi_{\{u_i>0\}}\big(1-\varphi^L_K(u)\big)
	b_i\cdot\na u_idxdt,
\end{align*}
and we recall that the terms $P_{11},\ldots,P_{15}$ are defined after \eqref{2.conc4}.

%%%%%%%%%%%%%%%%

\subsection{End of the proof}

We claim that the right-hand side of \eqref{2.conc5} can be bounded from above
by $\int_0^s H_K^L(u|v)dt$ (up to a constant), which then allows for a
Gronwall argument to conclude that $H_K^L(u|v)=0$.
To this end, we estimate the terms $Q_i$ and $P_i$.

The terms $Q_2$ and $Q_3$ can be bounded from above by a constant times the
entropy $H_K^L(u|v)$. This was shown by Fischer in \cite{Fis17}, and we recall
his result for the convenience of the reader.
\begin{lemma}[Lemma 9 in \cite{Fis17}]\label{lem.fis}
There exists $L\in \N$ such that for all $K\in \N$,
\begin{align}
  \int_\Omega\chi_{\{\sum_{\ell=1}^nu_\ell\geq L\}}\bigg(1+\sum_{i=1}^nu_i\bigg)dx
	&\le 2H_{K}^{L}(u|v), \label{2.fis1} \\
  \int_\Omega\chi_{\{\sum_{\ell=1}^nu_\ell\leq L\}}\sum_{i=1}^n|u_i-v_i|^2dx
	&\le C(L)H_{K}^{L}(u|v). \label{2.fis2}
\end{align}
\end{lemma}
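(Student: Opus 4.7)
The plan is to establish both inequalities by a pointwise analysis of the entropy density
$$
e(u):=\sum_{i=1}^n\Big(u_i(\log u_i+\lambda_i-1)-\varphi_K^L(u)\,u_i(\log v_i+\lambda_i)+v_i\Big),
$$
splitting $\Omega$ into the two sets $\{\sum_\ell u_\ell\le L\}$ and $\{\sum_\ell u_\ell\ge L\}$, on which $\varphi_K^L$ behaves qualitatively differently, and then integrating.

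On the low-density set $\{\sum_\ell u_\ell\le L\}$, property (L2) forces $\varphi_K^L(u)=1$, so $e(u)$ collapses to the standard relative entropy density $\sum_i\big(u_i\log(u_i/v_i)-u_i+v_i\big)$. Each summand is nonnegative, vanishes only at $u_i=v_i$, and has second $u_i$-derivative $1/u_i$; combined with \eqref{1.v1}, a second-order Taylor expansion on the compact parameter range $u_i\in[0,L]$, $v_i\in[c,C]$ will yield a uniform constant $C(L)>0$ with $u_i\log(u_i/v_i)-u_i+v_i\ge C(L)\,|u_i-v_i|^2$. Summing over $i$ and integrating gives \eqref{2.fis2}.

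On the high-density set, I would discard $\varphi_K^L\ge 0$ and use $|\log v_i+\lambda_i|\le C'$ (from \eqref{1.v1}) to estimate
$$
e(u)\ge \sum_{i=1}^n u_i\log u_i-\widetilde C\sum_{i=1}^n u_i+nc,\qquad \widetilde C:=1+C'+\max_i|\lambda_i|.
$$
Exploiting $s\log s\to\infty$, I would pick $S=S(\widetilde C)$ so that $s\log s-\widetilde C s\ge 2s+1$ for $s\ge S$, while $s\log s-\widetilde C s\ge-M(\widetilde C)$ for all $s\ge 0$. Splitting the $i$-sum according to $u_i\gtrless S$ then produces $e(u)\ge 2\sum_i u_i-(2nS+nM-nc)$, and choosing $L$ large enough that $\tfrac32 L\ge \tfrac12+2nS+nM-nc$ will ensure the pointwise bound $e(u)\ge\tfrac12\big(1+\sum_i u_i\big)$ on $\{\sum_\ell u_\ell\ge L\}$. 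Integration yields \eqref{2.fis1}.

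The only delicate issue is verifying that $L$ can be chosen independently of $K$, because otherwise the lemma would be useless inside the Gronwall step that follows. This works out because the only properties of $\varphi_K^L$ used in the analysis are $0\le\varphi_K^L\le 1$ and (L2), whereas the thresholds $S$, $M$, $\widetilde C$ depend only on $n$, $\lambda_i$, and the bounds $c$, $C$ for $v$. Beyond this bookkeeping the proof is essentially a one-liner: the quadratic lower bound for the relative entropy near the diagonal gives \eqref{2.fis2}, and the super-linear growth of $s\log s$ gives \eqref{2.fis1}.
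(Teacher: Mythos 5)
Your argument is correct. The paper does not prove this statement itself: it is quoted as Lemma~9 of Fischer~\cite{Fis17}, so there is no internal proof to compare against, and your reconstruction supplies the expected elementary argument. On $\{\sum_\ell u_\ell\le L\}$, property (L2) collapses the integrand of $H_K^L$ to the ordinary relative-entropy density, and the uniform quadratic lower bound
$u_i\log(u_i/v_i)-u_i+v_i\ge|u_i-v_i|^2/\bigl(2\max(L,C)\bigr)$
for $u_i\in[0,L]$, $v_i\in[c,C]$ indeed gives \eqref{2.fis2}; your ``second-order Taylor expansion'' is fine for $u_i>0$, and the endpoint $u_i=0$, where the density fails to be $C^1$, is checked directly since $g(0,v_i)=v_i\ge v_i^2/(2C)\ge v_i^2/(2\max(L,C))$. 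On $\{\sum_\ell u_\ell\ge L\}$, dropping $\varphi_K^L\in[0,1]$, using \eqref{1.v1} to bound $|\log v_i+\lambda_i|$, and exploiting the super-linear growth of $s\log s$ gives the pointwise bound $e(u)\ge\tfrac12\bigl(1+\sum_i u_i\bigr)$ once $L$ is large enough; together with the nonnegativity of the relative entropy on the complementary set this shows $e(u)\ge 0$ everywhere, so integrating and restricting to each set produces both \eqref{2.fis1} and \eqref{2.fis2}. Your final point is exactly the crucial bookkeeping: $L$ is determined by $n$, the $\lambda_i$, and the bounds $c,C$ on $v$ alone, and never by $K$, because only the $K$-independent properties (L1) and (L2) of the cutoff enter the argument.
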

Hence, we infer that
$$
  Q_2  +Q_3 \le C(L,K,f,v)\int_0^s H_K^L(u|v)dt.
$$

It follows from (L1), (L2), Young's inequality, and Lemma \ref{lem.fis} that
\begin{align*}
  Q_4 &= \sum_{i=1}^n\int_0^s\int_\Omega\chi_{\{u_i>0\}}
	\chi_{\{\sum_{\ell=1}^nu_\ell\geq L\}}\big(1-\varphi^L_K(u)\big)b_i\cdot\na u_idxdt \\
  &\le C(b)\sum_{i=1}^n\int_0^s\int_\Omega\chi_{\{\sum_{\ell=1}^nu_\ell\geq L\}}
	|\na u_i|dxdt \\
  &\le \frac{\eta_0}{2}\sum_{i=1}^n\int_0^s\int_\Omega
	\chi_{\{\sum_{\ell=1}^nu_\ell\geq L\}}|\na {u_i}|^2dxdt
  + C(b)\int_0^s\int_\Omega\chi_{\{\sum_{\ell=1}^nu_\ell\geq L\}}dxdt \\
  &\le \frac{\eta_0}{2}\sum_{i=1}^n\int_0^s\int_\Omega
	\chi_{\{\sum_{\ell=1}^nu_\ell\geq L\}}|\na {u_i}|^2dxdt
	+ C(b)\int_0^s H_K^L(u|v)dt,
\end{align*}
and the first term on the right-hand side can be absorbed by $Q_1$.
In a similar way, using (L2), (L4), and Lemma \ref{lem.fis}, we have
\begin{align*}
  & P_{11} + P_{12} \\
	&= \sum_{i,j=1}^n\int_0^s\int_\Omega \chi_{\{\sum_{\ell=1}^nu_\ell\geq L\}}
	\pa_j\varphi^L_K(u)(\log v_i+\lambda_i)
	\bigg(\sum_{\ell=1}^nA_{j\ell}(u)\na u_\ell-u_jb_j\bigg)\cdot\na u_idxdt, \\
  &\phantom{xx}{} +\sum_{i,j=1}^n\int_0^s\int_\Omega
	\chi_{\{\sum_{\ell=1}^nu_\ell\geq L\}}\pa_j\varphi^L_K(u)(\log v_i+\lambda_i)
	\bigg(\sum_{\ell=1}^nA_{i\ell}(u)\na u_\ell-u_ib_i\bigg)\cdot\na u_jdxdt \\
  &\le \frac{C(v,b)}{\log(K+1)}\sum_{i,j=1}^n\int_0^s\int_\Omega
	\chi_{\{\sum_{\ell=1}^nu_\ell\geq L\}}|\na u_i|(|\na u_j|+1)dxdt \\
  &\le \frac{C(v,b)}{\log(K+1)}\sum_{i=1}^n\int_0^s\int_\Omega
	\chi_{\{\sum_{\ell=1}^nu_\ell\geq L\}}|\na u_i|^2dxdt
	+ C(v,b)\int_0^s H_K^L(u|v)dt.
\end{align*}
Furthermore, taking into account (L2), (L5), and Lemma \ref{lem.fis},
\begin{align*}
  P_{13} &= \sum_{i,j,k=1}^n\int_0^s\int_\Omega\chi_{\{\sum_{\ell=1}^nu_\ell\geq L\}}
	u_i\pa_j\pa_k\varphi^L_K(u)(\log v_i+\lambda_i) \\
	&\phantom{xx}{}\times
	\bigg(\sum_{\ell=1}^nA_{j\ell}(u)\na u_\ell-u_jb_j\bigg)\cdot\na u_kdxdt \\
  &\le \frac{C(v,b)}{\log(K+1)}\sum_{i,j=1}^n\int_0^s\int_\Omega
	\chi_{\{\sum_{\ell=1}^nu_\ell\geq L\}}|\na u_i|(|\na u_j|+1)dxdt \\
  &\le \frac{C(v,b)}{\log(K+1)}\sum_{i=1}^n\int_0^s\int_\Omega
	\chi_{\{\sum_{\ell=1}^nu_\ell\geq L\}}|\na u_i|^2dxdt + C(v,b)\int_0^s H_K^L(u|v)dt.
\end{align*}
Finally, using (L2)-(L4) and estimating as before:
\begin{align*}
  P_{14} + P_{15}
	&\leq \frac{C(v,b)}{\log(K+1)}\sum_{i,j=1}^n\int_0^s\int_\Omega
	\chi_{\{(L+e)^{K+1}>\sum_{\ell=1}^nu_\ell\geq L\}}
	\big(u_i|\na u_j|+u_i+|\na u_j|\big)dxdt \\
  &\leq \frac{C(v,b)}{\log(K+1)}\sum_{i=1}^n\int_0^s\int_\Omega
	\chi_{\{(L+e)^{K+1}>\sum_{\ell=1}^nu_\ell\geq L\}}|\na u_i|^2dxdt \\
  &\phantom{xx}{} +\frac{C(v,b)}{\log(K+1)}\int_0^s\int_\Omega
	\chi_{\{(L+e)^{K+1}>\sum_{\ell=1}^nu_\ell\geq L\}}
	\bigg(1+\sum_{i=1}^nu_i^2\bigg)dxdt \\
  &\leq \frac{C(v,b)}{\log(K+1)}\sum_{i=1}^n\int_0^s\int_\Omega
	\chi_{\{\sum_{\ell=1}^nu_\ell\geq L\}}|\na u_i|^2dxdt
	+ C(L,K,v,b)\int_0^s H_K^L(u|v)dt.
\end{align*}

Summarizing, we infer from \eqref{2.conc5} that
\begin{align*}
  H_K^L(u|v)\Big|_0^s &\le C(L,K,f,v,b)\int_0^s H_K^L(u|v)dt \\
	&\phantom{xx}{}
	+ \bigg(-\frac{\eta_0}{2}+\frac{C(v,b)}{\log(K+1)}\bigg)\sum_{i=1}^n\int_0^s
	\int_\Omega\chi_{\{\sum_{\ell=1}^nu_\ell\geq L\}}|\na u_i|^2dxdt.
\end{align*}
Choosing $K\in\N$ sufficiently large, the second term on the
right-hand side is nonpositive and consequently,
$$
  H_K^L(u|v)\Big|_0^s \le C(L,K,f,v)\int_0^s H_K^L(u|v)dt.
$$
It remains to determine $L\in\N$. Since we assumed that the initial data $u^0$ is
bounded, we choose $L\in\N$ such that $\sum_{i=1}^n u_i^0<L$. Then
$\varphi_K^L(u^0)=1$ and
$H_K^L(u(0)|v(0))=H_K^L(u^0,u^0)=0$. The Gronwall lemma shows that
$H_K^L(u(s)|v(s))=0$ for all $s\in(0,T^*)$. We claim that this yields
$u(s)=v(s)$ for $s\in(0,T^*)$. Indeed, by \eqref{2.fis2} in Lemma \ref{lem.fis},
it follows that $u_i(s)=v_i(s)$ in $\{\sum_{\ell=1}^n u_\ell\le L\}$
for all $i=1,\ldots,n$ and $s\in(0,T^*)$. Furthermore, by \eqref{2.fis1} in
Lemma \ref{lem.fis}, we have $\mbox{meas}(\{\sum_{\ell=1}^n u_\ell\ge L\})=0$.
Therefore, $u_i(s)=v_i(s)$ on $\Omega$, which concludes the proof.

%%%%%%%%%%%%%%%%%%%%%%%%%%%%%%%%%%%%%%%%%%%%%%%%%%%%%%%%%%%%%%%%%%%%%%%%%%%%%%%

\end{document}